\documentclass{amsart}
\usepackage[dvipsnames,usenames]{xcolor}
\usepackage{amsmath,amssymb,amsthm, tikz-cd}
\usepackage{stmaryrd}
\usepackage{graphicx}
\usepackage{enumitem}
\usepackage[margin=1in]{geometry}
\usepackage{dutchcal}
\usepackage{colonequals}
\usepackage{diagbox}
\usepackage{tabularray}
\usepackage{comment}
\usepackage{hyperref}
\usepackage{mabliautoref}
\usepackage[normalem]{ulem}
\input xy
\xyoption{all}
\input{kmacros3.sty}
\setlength\parindent{0pt}
\setlength{\parskip}{5pt}

\setlist[enumerate]{label = (\roman*), topsep=0pt, itemsep=0pt}

\newlist{enuroman}{enumerate}{1}
\setlist[enuroman]{
	label = (\roman*),
	topsep = 0pt,
	labelindent = \parindent}
	
\newlist{enualph}{enumerate}{2}
\setlist[enualph]{
	label = (\alph*),
	topsep = 0pt,
	labelindent = \parindent}

\newlist{enuarab}{enumerate}{3}
\setlist[enuarab]{
	label = (\arabic*),
	topsep = 0pt,
	labelindent = \parindent}

\setlist[itemize]{label = $\diamond$, topsep=0pt, itemsep=0pt, labelindent=0pt}

\DeclareRobustCommand\iff{\;\Longleftrightarrow\;}

\newcommand{\sq}{\subseteq}

\newcommand{\F}{{\bf F}}

\newcommand{\fm}{\mathfrak{m}}

\newcommand{\ceq}{\colonequals}

\DeclareMathOperator{\lct}{lct}

\renewcommand{\phi}{\varphi}
\renewcommand{\to}{\longrightarrow}

\newcommand{\llb}{[ \! [}
\newcommand{\rrb}{] \! ]}

\DeclareMathOperator{\ppt}{ppt}

\renewcommand{\mod}{\text{ mod }}

\begin{document}

\title[Plus-pure thresholds of some cusps in mixed characteristic]{Plus-pure thresholds of some cusp-like singularities \\ in mixed characteristic}
\author[Cai]{Hanlin Cai}
\address{Department of Mathematics, Columbia University, 2990 Broadway, Mathematics Hall, Rm 307A, 
New York New York 10027}
\email{hc3589@columbia.edu}

\author[Pande]{Suchitra Pande}
\address{Department of Mathematics, University of Utah, Salt Lake City, UT 84112, USA}
\email{suchitra.pande@utah.edu}

\author[Quinlan-Gallego]{Eamon Quinlan-Gallego}
\address{Department of Mathematics, University of Utah, Salt Lake City, UT 84112, USA}
\email{eamon.quinlan@utah.edu}

\author[Schwede]{Karl Schwede}
\address{Department of Mathematics, University of Utah, Salt Lake City, UT 84112, USA}
\email{schwede@math.utah.edu}

\author[Tucker]{Kevin Tucker}
\address{Department of Mathematics, University of Illinois at Chicago, Chicago, IL, USA}
\email{kftucker@uic.edu}
\maketitle

\begin{abstract}
	Log-canonical and $F$-pure thresholds of pairs in equal characteristic admit an analog in the recent theory of singularities in mixed characteristic, which is known as the plus-pure threshold. In this paper we study plus-pure thresholds for singularities of the form $p^a + x^b \in \bZ_p \llb x \rrb$, showing that in a number of cases this plus-pure threshold agrees with the $F$-pure threshold of the singularity $t^a + x^b \in \F_p \llb t, x \rrb$. We also discuss a few other sporadic examples.
\end{abstract}

\section{Introduction}

Let $X$ be a smooth algebraic variety over ${\bf C}$, let $x \in X$ be a closed point and $D = (f = 0)$ be a prime divisor containing $x$. If $R \ceq \widehat{\cO_{X, x}}$ denotes the completion of the local ring at $x$, the log-canonical threshold $\lct(R, f)$ is the threshold in $t$ at which the pair $(X, tD)$ becomes not log canonical at $x$; in other words, it is the first jumping number of the multiplier ideal $\mathcal J(R, f^t)$. This invariant has been used prominently in birational algebraic geometry and the minimal model program; see  \cite{KollarSingularitiesOfPairs,LazarsfeldPositivity2} for some applications. 

Log canonical pairs are closely related to $F$-pure pairs in characteristic $p > 0$ \cite{HaraWatanabeFRegFPure,MustataSrinivasOrdinary,BhattSchwedeTakagiweakordinaryconjectureandFsingularity}. Inspired by this connection, Takagi and Watanabe defined the $F$-pure threshold $\fpt(R, f)$ as the threshold in $t$ at which $(R, f^t)$ becomes not $F$-pure \cite{TakagiWatanabeFPureThresh}, \cf \cite{MustataTakagiWatanabeFThresholdsAndBernsteinSato,HunekeMustataTakagiWatanabeFThresholdsTightClosureIntClosureMultBounds}.  When $R$ is regular, this is also the first jumping number of the test ideal $\tau(R, f^t)$, a characteristic-$p$ analog of the multiplier ideal.  Note that there has been a great deal of effort directed towards computing the $F$-pure thresholds of specific (classes of) polynomials \cite{ShibutaTakagiLCThresholds,HernandezFInvariantsOfDiagonalHyp,HernandezFPureThresholdOfBinomial,BhattSingh.FPTOfCalabiYau,MillerSinghVarbaro.FPTOfDeterminantal,HernandezNunezBetancourtWittZhang.FPTOfHomogeneous,HernandezTeixeira.FThresholdFunctionSyzygyGapFractals,DeStefaniNunezBetancourt.FThresholdsOfGradedRings,Muller.FPTOfQuasiHomogeneous,GiladElementaryComputationOfFPTOfEllipticCurve,KadyrsizovaKenkelPageSinghSmithVraciuWitt.LowerBoundsExtremal,SmithVraciuValuesOfFPTForHomog}. 

In recent years, building upon the theory of Scholze's perfectoid algebras \cite{ScholzePerfectoidspaces} and the work of Andr\'e, Bhatt, Gabber, Heitmann-Ma and others  \cite{AndreDirectsummandconjecture,BhattDirectsummandandDerivedvariant,AndreWeaklyFunctorialBigCM,GabberMSRINotes,HeitmannMaBigCohenMacaulayAlgebraVanishingofTor,BhattAbsoluteIntegralClosure}, the development of a theory of singularities in mixed characteristic is now underway \cite{MaSchwedePerfectoidTestideal,MaSchwedeSingularitiesMixedCharBCM,DattaTuckerOpenness,MaSchwedeTuckerWaldronWitaszekAdjoint,BMPSTWW1,RobinsonToricBCM,HaconLamarcheSchwede.GlobalGenOfTestIdeals,MurayamaSymbolicTestIdeal,CaiLeeMaSchwedeTuckerPerfectoidHilbertKunz,BMPSTWW-RH,BMPSTWW-PerfectoidPure,RodriguezBCMThresholds}. This theory provides an analog for the multiplier and test ideals.  The first jumping number---known as the {\em plus-pure threshold}---therefore provides an analog of the log-canonical and $F$-pure thresholds. A succinct definition is as follows: if $(R, \fm)$ is a regular complete local domain of mixed characteristic, $0 \neq f \in R$ is an element, and $t \in \bQ_{>0}$ is a positive rational number, we denote by $f^t: R \to R^+$ the unique $R$-linear map sending $1 \in R$ to a choice of $f^t \in R^+$; then the plus-pure threshold of $(R,f)$ is given by
$$\ppt(R, f) \ceq \sup \{ t \in \bQ_{> 0} \ | \ f^t:  R \to  R^+ \text{ splits }\}, $$
where $R^+$ is the absolute integral closure of $R$ (see \autoref{lem.EquivPurityForCompletionIdealContainment} for other characterizations).

Except for some toric-type singularities, see for instance \cite{RobinsonToricBCM}, essentially no examples of plus-pure thresholds have been calculated (although some upper and lower bounds were known, and sometimes they coincide, see below). This paper begins this exploration by computing the plus-pure thresholds of some cusp-like singularities $f \ceq p^a + x^b \in \bZ_p \llb x \rrb$, where $\Z_p$ is the ring of $p$-adic integers.  Indeed, unlike characteristic $p > 0$, one cannot simply use Frobenius and so the computations become much more subtle.

Before we discuss our work, we remark that something was known about the examples $f \ceq p^2 + x^3 \in \Z_p \llb x \rrb$ and $f \ceq p^3 + x^2 \in \Z_p \llb x \rrb$. For either choice, by \cite[Proposition 4.17, Lemma 4.18]{BMPSTWW1} or \cite[Theorem 6.21]{MaSchwedeSingularitiesMixedCharBCM} and \cite{BhattAbsoluteIntegralClosure}, {if the map $\Z_p \llb x \rrb \to \Z_p \llb x \rrb^+$ given by $1 \mapsto f^t$ were to split for some $t \in \Q_{>0}$, the pair $(\Z_p \llb x \rrb, f^t)$ would be log-terminal, then a blowup computation shows that $t < 5/6$ and therefore $\ppt(\Z_p \llb x \rrb, f) \leq 5/6$}. On the other hand, the $F$-pure threshold of the related polynomial $f_0 \ceq y^2 + z^3 \in \F_p \llb y, z \rrb$ provides a lower bound for $\ppt(\Z_p \llb x \rrb, f)$; indeed, combining \cite[Example 7.10]{MaSchwedeTuckerWaldronWitaszekAdjoint} with \cite{BhattAbsoluteIntegralClosure}, we see that $f^t: \Z_p \llb x \rrb \to \Z_p \llb x \rrb^+$ splits whenever $(\F_p \llb y, z \rrb, f_0^t)$ is $F$-regular. We conclude that
$$\fpt \big( \F_p \llb y, z \rrb, f_0 \big) \leq \ppt \big( \Z_p \llb x \rrb, f \big) \leq \frac{5}{6},$$
and we recall that the lower bound $\fpt(\F_p \llb y, z \rrb, f_0)$ is known to be as follows (see \cite[Example 4.3]{MustataTakagiWatanabeFThresholdsAndBernsteinSato}):

$$\fpt \big( \F_p \llb y, z \rrb , f_0 \big) = 
\begin{cases} 
	1/2 \text{ if } p = 2 \\ 
	2/3 \text{ if } p = 3 \\
	(5p - 1)/6p \text{ if } p \equiv 5 \mod 6 \\
	5/6 \text{ if } p \equiv 1 \mod 6.
\end{cases}$$
In particular, it was known that $\ppt( \Z_p \llb x \rrb, f) = 5/6$ when $p \equiv 1 \text{ mod } 6$.

As our first contribution, we show that analogous upper and lower bounds hold for any cusp-like polynomial $f = p^a + x^b \in \bZ_p \llb x \rrb$; this is verified in detail in \autoref{prop.fpt<=+pt<=lct} by utilizing some of the main results of \cite{MaSchwedeSingularitiesMixedCharBCM,MaSchwedeTuckerWaldronWitaszekAdjoint,BMPSTWW1}.

Our main goal is then to show that, in many cases, one has $\ppt(R, f) = \fpt(R_0, f_0)$; in other words, that one has equality on the lower bound. For example, we prove that this is the case for the polynomials $f = p^2 + x^3$ and $f = p^3 + x^2$ discussed above. Note that, by \autoref{prop.fpt<=+pt<=lct} as mentioned above, one has $\ppt(f) = {1 \over a} + {1 \over b}$ whenever $\fpt(R_0, f_0) = {1 \over a} + {1 \over b}$, and some of the remaining cases are handled by the theorem below.

\begin{mainthm*}[{\autoref{cor-ppt-test}}] \label{mainthm-ppt-test}
	Fix a prime $p > 0$, let $R \ceq \Z_p\llbracket x \rrbracket$ be a power series ring in one variable over the $p$-adic integers, and let $R_0 \ceq \F_p \llbracket y, x \rrbracket$ be a power series ring in two variables over $\F_p$. Fix integers $a, b \geq 2$ and consider the polynomials $f \ceq p^a + x^b \in R$ and $f_0 \ceq y^a + x^b \in R_0$.

	Assume that $\fpt(R_0, f_0) \neq {1 \over a} + {1 \over b}$, and fix an integer $e \geq 1$ such that $p^e \fpt(R_0, f_0) \in \Z$. If we have 
	$$\fpt(R_0, f_0) \geq \frac{1}{a} + \frac{1}{b} - \frac{1}{a p^e},$$
	then $\ppt(R, f) = \fpt(R_0, f_0)$.
\end{mainthm*}

Note that an algorithm for the computation of $\fpt(R_0, f_0)$ is provided in \cite{HernandezFPureThresholdOfBinomial,HernandezFInvariantsOfDiagonalHyp}, \cf \cite{ShibutaTakagiLCThresholds} (we briefly discuss this algorithm in Section 3). It follows from some of the references above that, unless one has $\fpt(R_0, f_0) = {1 \over a} + {1 \over b}$, the denominator of $\fpt(R_0, f_0)$ is a power of $p$, and therefore one can always choose an $e \geq 1$ as in the statement of the theorem.

Since the $F$-pure threshold $\fpt(R_0, f_0)$ can be computed following the algorithm mentioned above, our result gives an effective method for finding values of $p,a,b$ for which we have $\ppt(p^a + x^b) = \fpt(y^a + x^b)$. Using the slightly stronger version of the Main Theorem given in \autoref{cor-ppt-test}, as well as the discussion thereafter, we know that $\ppt(R, p^a + x^b) = \fpt(R_0, y^a + x^b)$ for the primes given in Table \ref{table.intro} (we also prove some more cases in \autoref{sec.sporadic}).

\begin{table}[h]
    \centering
    \caption{Values of $p$ for which we know $\ppt(p^a + x^b) = \fpt(y^a + x^b)$.}
    \label{table.intro}    
\begin{tblr}{|Q[c,m]|Q[c,m]|Q[c,m]|Q[c,m]|Q[c,m]|} 
\hline 
\diagbox[width=3em]{$a$}{$b$} & 2 & 3 & 4 & 5 \\ 
\hline 
2 & All $p$ & All $p$ & All $p$ & All $p$ \\ 
\hline
3 & All $p \neq 2$ & All $p \neq 3$ & {All $p \neq 3$ with \\ $p \not\equiv 11 \text{ mod }12$} &  {All $p \neq 3, 5$ \\ with $p \not\equiv 14 \text{ mod }15$}\\ 
\hline
4 & All $p$ & {All $p \neq 3$ with \\ $p \not \equiv 11 \text{ mod }12$} & {All $p \neq 2$ with \\ $p \not \equiv 3 \text{ mod } 4$} & {All $p \neq 2$ with \\ $p \not \equiv 3, 19 \text{ mod }20$ } \\ 
\hline
5 & {All $p \neq 2,5$ with \\ $p \not \equiv 7,9 \text{ mod } 10$} & {All $p \neq 3,5$ with \\ $p \not \equiv 8, 14 \text{ mod }15$} & {All $p \neq 2$ with \\ $p \not  \equiv 3,19 \text{ mod }20$} & {All $p \neq 5$ with \\ $p \not \equiv  2, 4 \text{ mod }5$}\\
\hline
\end{tblr}
\end{table}

One can also fix the residue characteristic $p$, and ask for which values of $a, b$ our main result applies.  For $p = 3$, we display these in a scatter plot in Figure \ref{scatterplot}. While the condition is not dense, one can see there are numerous values of $(a,b)$ where it holds. The plots for other primes look similar.

\begin{figure}[h]
    \label{scatterplot}    
    \includegraphics[scale=0.135]{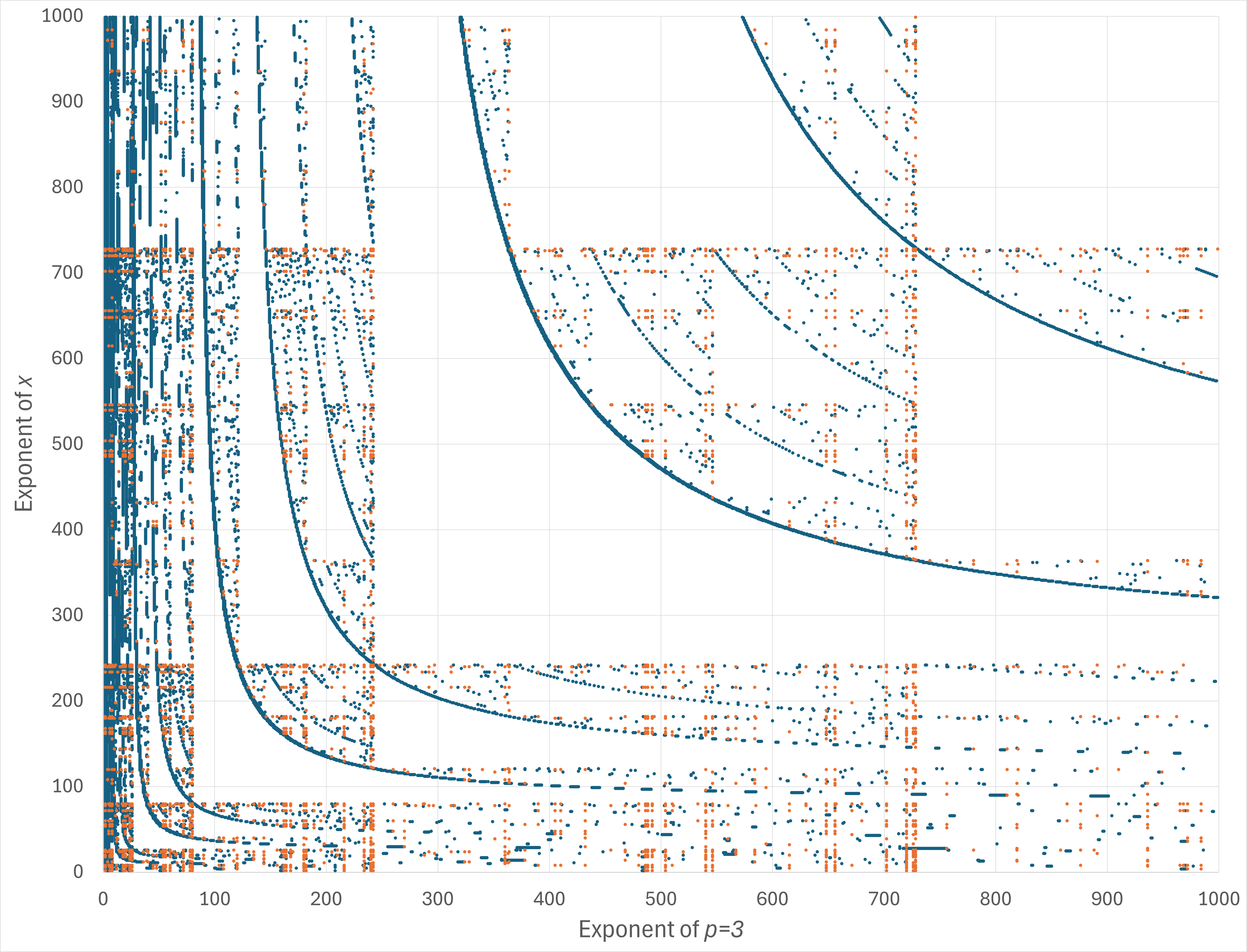}
    \caption{Values of $(a,b)$ where we know $\ppt(p^a + x^b)$ when $p=3$.  The orange dots correspond to the $a$ and $b$ values where $\fpt(f_0) = {1 \over a} + {1 \over b}$.  The remaining dots are consequences of \autoref{cor-ppt-test}.}
\end{figure}

The code to generate this data set is included as an ancillary file to the arXiv version of this document and is also available on Schwede's website.  
\begin{center}
    \url{https://www.math.utah.edu/~schwede/M2/MakeTable-PPT.m2}
\end{center}
The code is written in Macaulay2 \cite{M2} and uses the {\tt FrobeniusThresholds} package \cite{HernandezSchwedeTeixeiraWittFrobeniusThresholds}.

In \autoref{sec.sporadic}, we study more sporadic examples not covered in our main theorem.  First, we show that if $f = x^2 + y^3 \in \bZ_2\llbracket x,y\rrbracket = R$, then $\ppt(R, f) > 1/2 = \fpt(x^2 + y^3 \in \bF_2\llbracket x,y\rrbracket)$, see \autoref{cor.CuspNoPChar2}. In \autoref{lem.3Lines}, we study the $\ppt$ of the analog of ``three lines'', $px(x-p) \in \bZ_p\llbracket x \rrbracket$, showing it agrees with the $\fpt$ of the analogous three lines. Finally, in \autoref{prop.2ax2} we show that the plus-pure threshold of $2^a + x^2 \in \Z_2 \llb x \rrb$ is $1/2$ for all integers $a \geq 2$.

\subsection*{Acknowledgments}  The authors thank Linquan Ma for valuable discussions and comments on a previous draft.  In particular, Linquan Ma pointed out that \autoref{thm.CuspNoPChar2} holds for arbitrary big Cohen-Macaulay algebras (and not just $R^+$) which then shows \autoref{cor.CuspNoPChar2}.  The authors thank the anonymous referee as well as Marta Benozzo, Vignesh Jagathese, Vaibhav Pandey, and Pedro Ramirez-Moreno for valuable comments on previous drafts. Special thanks to Marta Benozzo for catching an error in a prior version of Proof \#2 in \autoref{lem.FPTCoverEqualityWithRamification}.
This material is partly based upon work supported by the National Science Foundation under Grant No. DMS-1928930 and by the Alfred P. Sloan Foundation under grant G-2021-16778, while the authors were visiting the Simons Laufer Mathematical Sciences Institute (formerly MSRI) in Berkeley, California, during the Spring 2024 semester.  
Hanlin Cai was partially supported by NSF FRG Grant \#1952522. Pande was partially supported by NSF Grant \#2101075. Quinlan-Gallego was partially supported by an NSF Postdoctoral Research Fellowship \#2203065.  
Schwede was partially supported by NSF Grant \#2101800 and by NSF FRG Grant \#1952522.
Tucker was partially supported by NSF Grant \#2200716.

\section{Preliminaries}

As usual, a prime number $p > 0$ is fixed. 

\subsection{Plus pure thresholds}

Recall a commutative integral domain $S$ is said to be {\it absolutely integrally closed} if every monic polynomial in $S[T]$ splits into a product of linear factors or, equivalently, if every monic polynomial with coefficients in $S$ has a solution in $S$. Given an arbitrary integral domain $R$ with fraction field $K$, after fixing an algebraic closure $\overline K$ of $K$, the integral closure $R^+$ of $R$ in $\overline K$ is an absolutely integrally closed domain, called the {\it absolute integral closure of $R$}.

If $S$ is an absolutely integrally closed domain and $z \in S$ is an element, given a nonnegative rational number $\alpha = n / m$ with $n, m \in \Z$, we let $z^\alpha \in S$ denote a choice of element for which $(z^\alpha)^m = z^n$. Note that up to units such a choice of $z^\alpha$ is unique and independent of the representation $\alpha = n / m$. We will be sloppy about the nonuniqueness: in general we will only make statements about the membership of $z^\alpha$ in some ideal of $R^+$ or statements about ideals in which $z^\alpha$ is a generator.  Such statements are, of course, independent of the choice of $z^\alpha$.

\begin{definition}
    \label{def.PPT}
    Suppose $(S, \fm)$ is a complete Noetherian local domain of residue characteristic $p > 0$.  We suppose further that $0 \neq f \in \fm$.  In this case we define the \emph{plus-pure-threshold} of $f$ to be
    \[
        \ppt(f) := \sup\big\{ \alpha \in \bQ_{\geq 0} \;\big|\; S \xrightarrow{1 \mapsto f^\alpha} S^+ \; \text{ is pure} \big\}.
    \]    
    In this paper, we only study this condition in the case where $S$ is regular.
\end{definition}

We write down some equivalent ways to check the purity of the map $S \xrightarrow{1 \mapsto f^{\alpha}} S^+$.

\begin{lemma}
    \label{lem.EquivPurityForCompletionIdealContainment}
    With notation as in \autoref{def.PPT} suppose $\alpha \in \bQ_{\geq 0}$, and $f^{\alpha} \in S^+$ is fixed. Then the following is equivalent.
    \begin{enumerate}  
        \item The map $S \xrightarrow{1 \mapsto f^{\alpha}} S^+$ is pure (equivalently splits). \label{lem.EquivPurityForCompletionIdealContainment.PurityNoCompletion} 
        \item If $E$ is an injective hull of $k := S/\fm$, then the induced map $S \otimes_S E \xrightarrow{1 \mapsto f^{\alpha}} S^+ \otimes_S E$ is injective. \label{lem.EquivPurityForCompletionIdealContainment.PurityViaInjectiveHull} 
        \item The map $S \xrightarrow{1 \mapsto f^{\alpha}} \widehat{S^+}$ is pure (equivalently splits) where $\widehat{S^+}$ is the $p$-adic (or $\fm$-adic) completion of $S^+$. 
        \label{lem.EquivPurityForCompletionIdealContainment.PurityWithCompletion}
        \item For each finite extension $S \subseteq S' \subseteq S^+$ containing $f^{\alpha}$ we have that 
        \[
            S \xrightarrow{1 \mapsto f^\alpha} S'
        \]
        splits.\label{lem.EquivPurityForCompletionIdealContainment.SplitByFiniteCovers}    
        \item $\tau_+(S, f^{\alpha}) = S$ where $\tau_+(S, f^{\alpha}) = \tau_+(S, \alpha \Div(f))$ is the plus-test-ideal denoted as $\tau_{\widehat{S^+}}(S, f^{\alpha})$ in \cite{MaSchwedeSingularitiesMixedCharBCM}, see also \cite[Definition 4.16]{BMPSTWW1}, \cite[Section 4]{HaconLamarcheSchwede.GlobalGenOfTestIdeals} (setting $X = \Spec S$), or \cite[Section 8]{BMPSTWW-RH}.  \label{lem.EquivPurityForCompletionIdealContainment.TestIdealVariant}
    \end{enumerate}
    If additionally we suppose that $S$ is regular, then these conditions are equivalent to:
    \begin{enumerate}
    \setcounter{enumi}{5}
        \item $f^{\alpha} \notin \fm S^+$. \label{lem.EquivPurityForCompletionIdealContainment.IdealContainmentNoCompletion}
        \item $f^{\alpha} \notin \fm \widehat{S^+}$ where $\widehat{S^+}$ is the $p$-adic (or $\fm$-adic) completion of $S^+$        \label{lem.EquivPurityForCompletionIdealContainment.IdealContainmentWithompletion}        
    \end{enumerate}
\end{lemma}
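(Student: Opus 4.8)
The plan is to prove the equivalences among (i)--(v) using only formal facts about purity, Matlis duality, and completion, and then to deduce (vi) and (vii) in the regular case from the big Cohen--Macaulay property of $\widehat{S^+}$. The key point throughout is that the source of every map in question is $S$ itself, a free (in particular finitely generated) module over the complete Noetherian local ring $S$, which is what forces ``pure'' to coincide with ``split''. For (i) $\iff$ (ii): if $S \xrightarrow{1 \mapsto f^{\alpha}} S^+$ is pure it is injective after $-\otimes_S E$ by definition; conversely, applying the exact functor $\Hom_S(-,E)$ to the injection $E = S\otimes_S E \hookrightarrow S^+\otimes_S E$ and rewriting $\Hom_S(-\otimes_S E, E) \cong \Hom_S(-, \Hom_S(E,E)) \cong \Hom_S(-, S)$ (tensor--hom adjunction, together with Matlis' identification $\Hom_S(E,E)\cong S$, which uses completeness of $S$) yields a surjection $\Hom_S(S^+,S)\twoheadrightarrow \Hom_S(S,S) = S$ given by precomposition with the map sending $1$ to $f^{\alpha}$; a preimage of $1$ is a splitting, and a splitting is certainly pure. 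So (i), (ii), and ``splits'' are all equivalent.

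Next I would compare (i) with (iii) by running the same Matlis argument with $\widehat{S^+}$ in place of $S^+$, so that (iii) becomes injectivity of $E \to \widehat{S^+}\otimes_S E$; it then suffices to observe that this is the same condition as (ii). Indeed $E = E_S(k)$ is $\fm$-power torsion --- hence also $p$-power torsion, as $p^n\in\fm^n$ --- so $M\otimes_S E\cong \widehat{M}\otimes_S E$ for every $S$-module $M$ and either the $\fm$-adic or the $p$-adic completion $\widehat M$: write $E$ as the union of its submodules killed by $\fm^n$ (resp.\ by $p^n$), note each is a module over the corresponding Artinian quotient of $S$, use $M/\fm^n M = \widehat M/\fm^n\widehat M$ (resp.\ $M/p^n M = \widehat M/p^n\widehat M$, valid since $S^+$ is $p$-torsion-free), and pass to the colimit. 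For (i) $\iff$ (iv), write $S^+ = \varinjlim_{S'} S'$ over the directed family of module-finite $S$-subalgebras $S'\subseteq S^+$ containing $f^{\alpha}$ (cofinal among all module-finite $S$-subalgebras, since $f^{\alpha}$ is integral over $S$): if $S\to S^+$ is pure, factoring any injection $S\otimes_S L\hookrightarrow S^+\otimes_S L$ through $S'\otimes_S L$ shows $S\to S'$ is pure, and as $S$ is Noetherian $S'/S$ is finitely presented, so the pure inclusion $S\subseteq S'$ splits off (a pure submodule with finitely presented cokernel is a direct summand); conversely filtered colimits are exact, so $\ker(S\otimes_S L\to S^+\otimes_S L) = \varinjlim_{S'}\ker(S\otimes_S L\to S'\otimes_S L) = 0$ when each $S\to S'$ is pure. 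Finally, (iii) $\iff$ (v) is the defining property of the plus-test ideal: $\tau_+(S,f^{\alpha}) = S$ exactly when $S \xrightarrow{1\mapsto f^{\alpha}}\widehat{S^+}$ splits, by \cite{MaSchwedeSingularitiesMixedCharBCM}, \cite[Definition 4.16]{BMPSTWW1}, \cite[Section 8]{BMPSTWW-RH}.

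Now assume $S$ is regular and fix a regular system of parameters $x_1,\dots,x_d$ generating $\fm$ (I treat the mixed-characteristic case, which is the relevant one). The implications (iii) $\Rightarrow$ (vi) and (iii) $\Rightarrow$ (vii) are trivial: a splitting $\psi$ with $\psi(f^{\alpha}) = 1$ precludes $f^{\alpha}\in\fm S^+$ or $f^{\alpha}\in\fm\widehat{S^+}$. For the converse, note first that (vi) $\iff$ (vii): reducing modulo $p$ gives $\widehat{S^+}/p\widehat{S^+} = S^+/pS^+$ (as $S^+$ is $p$-torsion-free), hence $\widehat{S^+}/\fm\widehat{S^+}\cong S^+/\fm S^+$, compatibly with the images of $f^{\alpha}$ (for the $\fm$-adic completion one uses instead the standard identity $\widehat A/I\widehat A\cong A/I$ for a finitely generated ideal $I$). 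It remains to prove (vii) $\Rightarrow$ (iii). Here I would invoke the theorem of Andr\'e, in the form used throughout \cite{MaSchwedeSingularitiesMixedCharBCM,BMPSTWW1} and building on \cite{BhattAbsoluteIntegralClosure}, that $\widehat{S^+}$ is a big Cohen--Macaulay $S$-algebra, so $x_1,\dots,x_d$ is a regular sequence on $\widehat{S^+}$. The standard colon-capturing property of regular sequences then gives $\big((x_1^n,\dots,x_d^n)\widehat{S^+} :_{\widehat{S^+}} (x_1\cdots x_d)^{n-1}\big) = \fm\widehat{S^+}$ for all $n\geq 1$, so $f^{\alpha}\notin\fm\widehat{S^+}$ forces $(x_1\cdots x_d)^{n-1}f^{\alpha}\notin (x_1^n,\dots,x_d^n)\widehat{S^+}$ for every $n$. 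Identifying $\widehat{S^+}\otimes_S E$ with $\varinjlim_n \widehat{S^+}/(x_1^n,\dots,x_d^n)\widehat{S^+}$ (transition maps being multiplication by $x_1\cdots x_d$), this says precisely that the image of the socle generator of $E$ under the map induced by $1\mapsto f^{\alpha}$ is nonzero; since $E$ is an essential extension of its one-dimensional socle, the map $E\to\widehat{S^+}\otimes_S E$ is injective, which by the Matlis criterion of the first paragraph is exactly (iii). Everything in the argument is soft homological algebra with purity, direct limits, and Matlis duality; the one genuinely nontrivial input --- and the step I expect to be the main obstacle --- is the use in (vii) $\Rightarrow$ (iii) of the fact that $\widehat{S^+}$ is big Cohen--Macaulay, the deep result of Andr\'e and Bhatt underpinning the whole mixed-characteristic theory.
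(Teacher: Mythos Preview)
Your proof is correct and follows essentially the same route as the paper: Matlis duality for (i)$\iff$(ii), the identification $\widehat{S^+}\otimes_S E\cong S^+\otimes_S E$ for (i)$\iff$(iii), the colimit argument for (i)$\iff$(iv), the definition of $\tau_+$ for (v), and the big Cohen--Macaulay property of $\widehat{S^+}$ (via \cite{BhattAbsoluteIntegralClosure}) for (vii)$\Rightarrow$(iii). The only cosmetic difference is in that last implication: the paper observes that big Cohen--Macaulay over regular implies $\widehat{S^+}$ is \emph{faithfully flat} over $S$, so the injection $k\hookrightarrow E$ stays injective after $-\otimes_S\widehat{S^+}$ and one concludes by looking at the socle; you instead unwind the same fact by hand via the colon identity $(x_1^n,\dots,x_d^n):(x_1\cdots x_d)^{n-1}=\fm$ for a regular sequence, which is precisely what underlies the faithful-flatness statement.
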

\begin{proof}
    In parts \ref{lem.EquivPurityForCompletionIdealContainment.PurityNoCompletion} and \ref{lem.EquivPurityForCompletionIdealContainment.PurityWithCompletion} we see that the ``equivalently splits'' statements are harmless as $S$ is complete so that pure and split maps from $S$ are equivalent, see \cite[Lemma 1.2]{FedderFPureRat}. The equivalence of \ref{lem.EquivPurityForCompletionIdealContainment.PurityNoCompletion} and \ref{lem.EquivPurityForCompletionIdealContainment.PurityViaInjectiveHull} follows easily from Matlis duality, see \cite[Lemma (2.1)(e)]{HochsterHunekeApplicationsofBigCM}. Then the equivalence of \ref{lem.EquivPurityForCompletionIdealContainment.PurityNoCompletion} and \ref{lem.EquivPurityForCompletionIdealContainment.PurityWithCompletion} follows from the fact that $\widehat{S^+}\otimes_SE\simeq S^+\otimes_S E$ so we win by using \ref{lem.EquivPurityForCompletionIdealContainment.PurityViaInjectiveHull}. Certainly \ref{lem.EquivPurityForCompletionIdealContainment.SplitByFiniteCovers} and \ref{lem.EquivPurityForCompletionIdealContainment.PurityNoCompletion} are equivalent since a filtered colimit of split maps is pure and a pure finite map of Noetherian rings is split, see \cite[\href{https://stacks.math.columbia.edu/tag/058H}{Tag 058H}]{stacks-project}.

        Unraveling the definition of $\tau_+(S, f^{\alpha})$ easily implies that \ref{lem.EquivPurityForCompletionIdealContainment.TestIdealVariant} is equivalent to many of the other definitions above. More precisely, since $\tau_+(S, f^{\alpha})$ is the Matlis dual of the image of $S\otimes_S E\to S^+\otimes_S E$, $\tau_+(S, f^{\alpha})=S$ if and only if $S\otimes_S E\to S^+\otimes_S E $ is injective.

    Clearly, \ref{lem.EquivPurityForCompletionIdealContainment.IdealContainmentNoCompletion} and \ref{lem.EquivPurityForCompletionIdealContainment.IdealContainmentWithompletion} are equivalent since $p\in \mathfrak{m}$, see \cite[\href{https://stacks.math.columbia.edu/tag/05GG}{Tag 05GG}]{stacks-project}. Therefore, to complete the proof, it suffices to prove the equivalence of \ref{lem.EquivPurityForCompletionIdealContainment.IdealContainmentWithompletion} and \ref{lem.EquivPurityForCompletionIdealContainment.PurityViaInjectiveHull}. If $f^\alpha\in \mathfrak{m}\widehat{S^+}$, then $S\to \widehat{S^+}$ is not pure by tensoring with $k$. It is for the converse that we finally need regularity.  Assume that $f^\alpha\notin \mathfrak{m}\widehat{S^+}$. Since $S$ is regular, $\widehat{S^+}$ is big Cohen-Macaulay and so is faithfully flat over $S$ by \cite[Page 77]{HochsterHunekeInfiniteIntegralExtensionsAndBigCM} or \cite[Lemma 2.9]{BhattAbsoluteIntegralClosure} (it is worth noting that if $S \to \widehat{S^+}$ is faithfully flat, then $S$ is regular by \cite{BhattIyengarMaRegularRingsPerfectoid}). Hence the injectivity of $S/\m =: k \to E$ extends to the injectivity of $k\otimes_S \widehat{S^+} \to E\otimes_S \widehat{S^+}$, giving the injectivity of $k \to E\otimes_S \widehat{S^+}$. By considering the socle, we have that $E\to E\otimes_S \widehat{S^+}$ is injective as desired.
\end{proof}

\begin{remark}
In a previous version of this article, we denoted $\ppt(f)$ by $+\text{pt}(f)$.  Because including the $+$ in the notation caused confusion, especially when writing on a board, we have switched to $\ppt$.  

    The closely related \emph{perfectoid pure threshold of $p$}, unfortunately also denoted by $\ppt(R, p)$, was defined as 
\[
\sup\big\{ a/p^e \in \bQ_{\geq 0}\;\big|\; S \xrightarrow{1 \mapsto \varpi^a} B \,\text{ is pure for some perfectoid $R$-algebra $B$ with $\varpi$ a $p^e$th root of $p$}\big\},
\] 
see \cite{yoshikawa2025computationmethodperfectoidpurity}.  

As that definition can clearly be generalized to any element of $S$, there is potential for confusion.  However, we believe that two notions are the same at least in the context of this paper where $S = W(k)\llbracket x_2, \dots, x_d\rrbracket$.  The key ingredient is a slight modification of \cite[Lemma 5.1.6]{CaiLeeMaSchwedeTuckerPerfectoidHilbertKunz} which lets one pass from $S^+$ to perfectoid algebras that can be used to test the perfectoid threshold up to perturbation by $f^\epsilon$ for $1 \gg \epsilon > 0$ which will not impact the supremum in the threshold.   For a discussion of the choice of such perfectoid algebras, see \cite[Section 2.3]{yoshikawa2025computationmethodperfectoidpurity}, and also \cite[Lemma 4.23]{BMPSTWW-PerfectoidPure} and \cite[Proposition 3.5]{Fayolle.PerfectoidCenters}
\end{remark}

\subsection{Ideal containments}

\begin{lemma} \label{lemma-key}
	Let $S$ be an absolutely integrally closed domain, let $z, y, y_1, \dots , y_s \in S$ be elements, let $e \geq 1$ be an integer and $\epsilon \in \Q$ be a rational number. 
	\begin{enuroman}
	\item If $\epsilon \in (0, \frac{p}{p-1}]$ then we have
	$$z \in (p^\epsilon, y) \iff z^{1/p^e} \in (p^{\epsilon / p^e}, y^{1/p^e} ).$$
	\item If $\epsilon \in (0, 1]$, we have
	$$z \in (p^\epsilon, y_1, \dots , y_s) \iff z^{1/p^e} \in (p^{\epsilon / p^e}, y_1^{1 / p^e}, \dots,  y_s^{1 / p^e}).$$
	\item If $\epsilon \in (0, \frac{p}{p-1}]$ then we have
	$$z \in (p^\epsilon, y) \implies z^\gamma \in \bigg( p^{\epsilon \alpha} \ y^{\beta} \ \bigg| \ {{\alpha, \beta \in \frac{1}{p^e} \Z \cap [0,1)} \atop {\alpha + \beta = \gamma  }} \bigg)$$

	\item If $\epsilon \in (0,1]$ and $\gamma \in \frac{1}{p^e} \Z \cap (0,1)$ then we have
	$$z \in (p^\epsilon, y_1, \dots , y_s) \implies z^\gamma \in \bigg( p^{\epsilon \alpha_0} y_1^{\alpha_1} \cdots y_s^{\alpha_s} \ \bigg| \ {{\alpha_i \in \frac{1}{p^e} \Z \cap [0,1)} \atop {\alpha_0 + \cdots + \alpha_s = \gamma  }} \bigg)$$
	\end{enuroman}
\end{lemma}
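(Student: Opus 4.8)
The plan is to prove all four parts by reducing the "$\implies$" statements (iii)–(iv) to the "$\iff$" statements (i)–(ii), and to prove (i)–(ii) by a direct computation in the absolutely integrally closed domain $S$ together with a binomial-expansion argument. The cleanest order is: first establish (ii), then deduce (i) as the special case $s = 1$ with the stronger range on $\epsilon$ handled separately, then bootstrap to (iii) and (iv).

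\textbf{Parts (i) and (ii).} For the forward direction of (ii), suppose $z = a p^\epsilon + \sum_{j} b_j y_j$ in $S$. Raising to the $p^e$-th power is not directly available, so instead I would take $p^e$-th roots: write $z^{1/p^e}$, $p^{\epsilon/p^e}$, $y_j^{1/p^e}$ for fixed choices in $S$, and observe that $(z^{1/p^e})^{p^e} = z$ lies in the ideal $J \ceq (p^{\epsilon/p^e}, y_1^{1/p^e}, \dots, y_s^{1/p^e})^{p^e}$, hence $z^{1/p^e}$ lies in the integral closure of $J$; since $S$ is absolutely integrally closed (so seminormal, and more to the point every integral element over an ideal with the right multiplicities is actually in a root ideal), one argues $z^{1/p^e} \in (p^{\epsilon/p^e}, y_1^{1/p^e}, \dots, y_s^{1/p^e})$ directly. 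The more robust route, which I expect the authors take, is: expand $z^{1/p^e} = (a p^\epsilon + \sum b_j y_j)^{1/p^e}$ using a (possibly infinite, but we only need the ideal membership) binomial-type expansion in $S$ — valid because $S$ is absolutely integrally closed and we may extract all needed roots — and check each monomial $p^{\epsilon \alpha_0 / p^e} y_1^{\alpha_1/p^e}\cdots$ lies in $J$. The key numerical point forcing the hypothesis on $\epsilon$: in part (ii) the constraint $\epsilon \le 1$ guarantees $p^{\epsilon k} \in (p^{\epsilon})^{\lceil \cdot \rceil}$ comparisons behave, while in part (i), because there is a single $y$, one can afford $\epsilon \le \frac{p}{p-1}$ since the relevant inequality becomes $\epsilon \cdot \frac{p-1}{p}\cdot(\text{something}) \le 1$. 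The reverse directions of (i) and (ii) are the easy implications: if $z^{1/p^e}$ lies in the root ideal, raise to the $p^e$-th power and use that $(p^{\epsilon/p^e})^{p^e} = p^\epsilon$, $(y_j^{1/p^e})^{p^e} = y_j$, together with the fact that any cross term has a factor that is a nonnegative power of one of the generators, hence lies in $(p^\epsilon, y_1, \dots, y_s)$.

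\textbf{Parts (iii) and (iv).} These follow from (i) and (ii) by writing $\gamma \in \frac{1}{p^e}\Z \cap (0,1)$ and using the already-established root containment for the exponent $1/p^e$ repeatedly. Concretely, from (ii), $z \in (p^\epsilon, y_1, \dots, y_s)$ gives $z^{1/p^e} \in (p^{\epsilon/p^e}, y_1^{1/p^e}, \dots, y_s^{1/p^e})$; now raise this membership to the $\gamma p^e$-th power (an integer, since $\gamma p^e \in \Z$) and expand multinomially. Each resulting monomial has the form $p^{\epsilon m_0 / p^e} y_1^{m_1/p^e}\cdots y_s^{m_s/p^e}$ with $m_0 + \cdots + m_s = \gamma p^e$; one then reduces each exponent $m_i / p^e$ modulo $1$, absorbing the integer part into an honest element of $S$ (here one uses that, e.g., $y_j^{\lfloor m_j/p^e\rfloor} \in S$), leaving a monomial with all exponents in $\frac{1}{p^e}\Z \cap [0,1)$ and total degree still $\equiv \gamma$; the condition $\gamma < 1$ ensures the total stays exactly $\gamma$ after reduction rather than wrapping. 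For (iii) the single-variable case is identical with the wider range $\epsilon \in (0, \frac{p}{p-1}]$ inherited from (i).

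\textbf{The main obstacle} I anticipate is making the binomial/multinomial "expansion" in $S$ rigorous: $S$ need not be complete or contain $\Q$, so one cannot literally write an infinite binomial series. The fix is to never actually expand an infinite series but instead argue at the level of ideal membership: if $w^{p^e} \in I$ for an ideal $I = (g_1, \dots, g_r)$ in an absolutely integrally closed domain, one wants $w \in (g_1^{1/p^e}, \dots)$-type conclusions, and this is exactly the kind of "root-extraction respects ideal membership" statement that holds in absolutely integrally closed (in particular, $p$-root-closed and seminormal) domains; I would isolate this as the one technical lemma, prove it by factoring the monic polynomial $T^{p^e} - (\text{element of }I)$ and tracking which root we chose, and then feed it into parts (i)–(iv). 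The arithmetic bookkeeping of the exponent inequalities (why $\epsilon \le \frac{p}{p-1}$ in the two-generator-ideal cases and $\epsilon \le 1$ in the many-generator cases) is routine once the algebraic framework is set up, so I would present those as short case checks rather than belabor them.
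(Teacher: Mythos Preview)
Your deduction of (iii) and (iv) from (i) and (ii) is correct and in fact a bit cleaner than the paper's: you simply raise the single containment $z^{1/p^e}\in(p^{\epsilon/p^e},y_1^{1/p^e},\dots,y_s^{1/p^e})$ to the integer power $\gamma p^e$, whereas the paper writes $\gamma=\sum_{i=1}^e c_ip^{-i}$ in base $p$ and applies (i)/(ii) once for each $i$. Both work; the paper's route has the minor advantage of yielding the ``no carries'' refinement they record in their remark, but your argument is entirely valid for the lemma as stated.

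The genuine gap is in the forward direction of (i) and (ii). None of the three mechanisms you propose actually goes through. Your first idea, that $(z^{1/p^e})^{p^e}\in J^{p^e}$ with $J=(p^{\epsilon/p^e},y_j^{1/p^e})$ forces $z^{1/p^e}\in J$, only gives that $z^{1/p^e}$ lies in the \emph{integral closure} of $J$, and an absolutely integrally closed domain need not have integrally closed ideals. Your ``technical lemma'' (if $w^{p^e}\in I=(g_1,\dots,g_r)$ then $w\in(g_1^{1/p^e},\dots)$) is literally the statement you are trying to prove, and the suggested proof by factoring $T^{p^e}-z$ only produces $p^e$-th roots of $z$ times roots of unity, which says nothing about ideal membership. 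The binomial-series idea you yourself flag as problematic, and rightly so.

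What you are missing is the paper's trick, which sidesteps all of this. Reduce to $e=1$. After absorbing coefficients into the $y_j$, write $z=ap^{\epsilon}+y_1+\cdots+y_s$, and \emph{posit} the shape of the root:
\[
z^{1/p}=s\,p^{\epsilon/p}+y_1^{1/p}+\cdots+y_s^{1/p}
\]
for an unknown $s\in S$. Raising to the $p$-th power and expanding, the diagonal terms give $s^pp^{\epsilon}+y_1+\cdots+y_s$ and every cross term is divisible by $p$ (or, in the two-generator case of (i), by $p^{\,1+\epsilon/p}$). Cancelling $y_1+\cdots+y_s$ from both sides and dividing by $p^{\epsilon}$ --- which is legal precisely when $\epsilon\le 1$ in (ii), respectively $\epsilon\le 1+\epsilon/p$, i.e.\ $\epsilon\le p/(p-1)$, in (i) --- leaves a \emph{monic} polynomial equation in $s$. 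Since $S$ is absolutely integrally closed, such an $s$ exists, and the displayed equation exhibits $z^{1/p}$ in the desired ideal. This is where the hypotheses on $\epsilon$ genuinely enter, and it is the one idea your sketch lacks.
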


These for $\epsilon \leq 1$ results can be deduced from \cite[Proposition 5.5]{ScholzePerfectoidspaces} as the $p$-adic completion of an absolutely integrally closed domain is perfectoid.  We provide a direct and down-to-earth proof for the convenience of the reader.

\begin{proof}
	We start with statement (i), and we observe that it is enough to prove it in the case $e = 1$. For the $(\Longrightarrow)$ direction, after replacing $y$ with a multiple we may assume that $z = a p^\epsilon + y$ for some $a \in S$, and we look for elements $s \in S$ satisfying $z^{1/p} = s p^{\epsilon / p} + y^{1/p}$ or, in other words,
	\begin{align*}
		ap^\epsilon + y & = (s p^{\epsilon / p} + y^{1/p})^p \\
			& = s^p p^{\epsilon} + p^{1 + (\epsilon / p)} \big( \cdots \big) + y.
	\end{align*}
	The assumption on $\epsilon$ ensures that $\epsilon \leq 1 + (\epsilon / p)$, and therefore after canceling the $y$ terms one can divide the above equation by $p^\epsilon$, which ensures that the resulting equation on $s$ is monic. Since $S$ is absolutely integrally closed by assumption, there are elements $s \in S$ satisfying the required equation.

	For the $(\Longleftarrow)$ direction, after replacing $y$ with a multiple we may assume that $z^{1/p} = s p^{\epsilon / p} + y^{1/p}$ for some $s \in S$, and the equation above gives $z \in (p^\epsilon, y)$. 

e	As before, it suffices to prove statement (ii) in the case $e = 1$, and we begin with the $(\Longrightarrow)$ direction. After replacing the $y_i$ with multiples, we may assume $z$ takes the form $z = ap^{\epsilon} + y_1 + \cdots + y_s$ for some $a \in S$, and we now look for $s \in S$ satisfying $z^{1/p} = sp^{\epsilon / p} + y_1^{1/p} + \cdots + y_s^{1/p}$ or, in other words,
	\begin{align*}
		ap^\epsilon + y_1 + \cdots + y_s & = (sp^{\epsilon / p} + y_1^{1/p} + \cdots + y_s^{1/p})^p \\
			& = s^p p^\epsilon + p \big( \cdots \big) + y_1 + \cdots + y_s,
	\end{align*}
	which once again turns into a monic equation on $s$ after cancellation and division by $p^\epsilon$. The $(\Longleftarrow)$ direction also follows from a direct argument, as in part (i).

	The proof of statement (iii) is identical to that of (iv), so let us prove (iv). The assumptions on $\gamma$ mean that we can write $\gamma = c_1 p^{-1} + c_2 p^{-2} + \cdots + c_e p^{-e}$ for some integers $c_i$ with $0 \leq c_i \leq p-1$. Using part (ii), we then obtain

	\begin{align*}
		z^\gamma = \prod_{i = 1}^e (z^{1/p^i})^{c_i} \in \prod_{i = 1}^e (p^{\epsilon / p^i}, y_1^{1/p^i}, \dots , y_s^{1/p^i})^{c_i}
	\end{align*}
	Note that the $i$-th factor in the above product of ideals is generated by the monomials
	$$p^{\epsilon a_{i0}/ p^i} y_1^{a_{i1} / p^i} \cdots y_s^{a_{is} / p^i}$$
	as the $a_{ij}$ range through all nonnegative integers with $\sum_j a_{ij} = c_i$. We conclude that the product is generated by elements of the form
	$$p^{\epsilon \sum a_{0i} / p^{i}} \ y_1^{ \ \sum a_{i1} / p^i} y_2^{\ \sum a_{i2} / p^i} \cdots \ y_s^{\ \sum a_{is} / p^i}, $$
	and by setting $\alpha_j = \sum_i a_{ij} / p^i$, we see that all of these are contained in the ideal given.
\end{proof}

\begin{remark} 
In statement (iii) and (iv) above, the proof gives an even stronger statement. For example, in (iv), whenever we have $z \in (p^\epsilon, y_1, \dots , y_s)$ we can conclude that $z^\gamma$ belongs to the ideal generated by the monomials $p^{\epsilon \alpha_0} y_1^{\alpha_1} \cdots y_s^{\alpha_s}$ as above, where we impose the extra condition that the $\alpha_i$ add up to $\gamma$ with no carries in base $p$.
\end{remark}

Before our next remark, we remind the reader that if $R$ is an $F$-finite regular domain, a pair $(R, f^t)$ is \emph{$F$-regular} if there exists $e > 0$ such that 
\[
    R \xrightarrow{1 \mapsto F^e_* f^{\lceil tp^e \rceil}} F^e_* R
\]
splits.  This is equivalent to requiring that $\tau(R, f^t) = R$.  See also  \cite{HaraYoshidaGeneralizationOfTightClosure,TakagiInversion,BlickleMustataSmithDiscretenessAndRationalityOfFThresholds} noting we do not have an extra test element contribution since $R$ is regular.  

\begin{lemma}
\label{lem.FPTCoverEqualityWithRamification}
	Let $k$ be a perfect field of characteristic $p > 0$, and consider $R \ceq k\llbracket u,x \rrbracket$ a formal power series ring in two variables over $k$. Given integers $a, b \geq 2$ , an integer $\ell \geq 1$, and a real number $t > 0$, we have
	$$(R, u^{1-1/a} x^{1 - 1/b} (u^\ell +x^\ell )^t) \text{ is $F$-regular} \iff (R, (u^{a\ell} + x^{b \ell})^t) \text{ is $F$-regular.}$$
\end{lemma}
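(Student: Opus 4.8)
The plan is to compute both sides with Fedder's criterion on the regular ring $R = k\llbracket u, x\rrbracket$ and to observe that the resulting non-membership conditions in $\mathfrak m^{[p^e]} = (u^{p^e}, x^{p^e})$ coincide term by term. Recall that for a pair $(R, \Delta)$ given by an effective $\mathbb{Q}$-divisor $\Delta = \sum_i c_i \divisor(g_i)$ with the $g_i$ reduced and pairwise coprime, strong $F$-regularity is equivalent to $1 \in \tau(R, \Delta)$, and $\tau(R,\Delta) = \bigcup_{e \geq 1} \bigl( \prod_i g_i^{\lceil (p^e - 1)c_i\rceil}\bigr)^{[1/p^e]}$; hence $(R,\Delta)$ is strongly $F$-regular precisely when $\prod_i g_i^{\lceil (p^e - 1)c_i\rceil} \notin \mathfrak m^{[p^e]}$ for some $e$, a condition that is read off monomial by monomial once everything is expanded in $u, x$. (Here one invokes Fedder's criterion and the standard formula for the test ideal of a $\mathbb{Q}$-divisor; a few subtleties about rounding and squarefreeness are addressed at the end.)

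Applying this to the left-hand pair, with $\Delta_L = (1 - \tfrac1a)\divisor(u) + (1 - \tfrac1b)\divisor(x) + t\,\divisor(u^\ell + x^\ell)$, the relevant element at level $e$ is, up to a unit, $u^{A_e} x^{B_e}(u^\ell + x^\ell)^{T_e}$ with $A_e = \lceil (p^e - 1)(1 - \tfrac1a)\rceil$, $B_e = \lceil (p^e - 1)(1 - \tfrac1b)\rceil$, and $T_e = \lceil (p^e - 1)t\rceil$. A short computation gives $p^e - A_e = 1 + \lfloor (p^e-1)/a\rfloor = \lceil p^e/a\rceil$, so that for an integer $m \geq 0$ one has $A_e + m < p^e \iff am < p^e$, and likewise $B_e + m < p^e \iff bm < p^e$. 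Expanding in characteristic $p$,
\[
u^{A_e} x^{B_e}(u^\ell + x^\ell)^{T_e} = \sum_{k = 0}^{T_e} \binom{T_e}{k}\, u^{A_e + \ell k}\, x^{B_e + \ell(T_e - k)},
\]
a sum of pairwise distinct monomials, so it lies outside $\mathfrak m^{[p^e]}$ exactly when there is a $k$ with $p \nmid \binom{T_e}{k}$, $a\ell k < p^e$, and $b\ell(T_e - k) < p^e$.

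For the right-hand pair $(R, t\,\divisor(u^{a\ell} + x^{b\ell}))$, the relevant element at level $e$ is $(u^{a\ell} + x^{b\ell})^{T_e} = \sum_{k=0}^{T_e}\binom{T_e}{k}\, u^{a\ell k}\, x^{b\ell(T_e - k)}$, again a sum of distinct monomials with the same $T_e$; it lies outside $\mathfrak m^{[p^e]}$ exactly when some $k$ has $p \nmid \binom{T_e}{k}$, $a\ell k < p^e$, and $b\ell(T_e - k) < p^e$. This is literally the same condition as on the left. Hence for every $e$ the left-hand element lies in $\mathfrak m^{[p^e]}$ if and only if the right-hand one does, and quantifying over $e$ yields the asserted equivalence of strong $F$-regularity.

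The point requiring care — and where it is easy to slip up — is the rounding in Fedder's criterion: it is the $(p^e - 1)$-normalization that makes $p^e - A_e$ equal to exactly $\lceil p^e/a\rceil$, so that ``$A_e + (\text{stuff}) < p^e$'' becomes the clean inequality ``$a\,(\text{stuff}) < p^e$'' matching the right-hand side; with the naive $\lceil p^e\Delta\rceil$ rounding the two conditions no longer agree at each individual $e$, only after quantifying over $e$, which is less transparent. One should also treat the case where $u^\ell + x^\ell$ or $u^{a\ell}+x^{b\ell}$ fails to be squarefree (which happens when $p \mid \ell$, respectively $p \mid \gcd(a,b)$, where the polynomial is a $p$-power), by passing to its squarefree part and carrying the corresponding multiplicity; the discrepancy this introduces in the exponents is bounded in $e$ and hence invisible to the ``there exists $e$'' quantifier. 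Finally, it is worth recording the conceptual picture behind the computation: the statement is a crepant-cover statement for the finite map $k\llbracket v,w\rrbracket \to R$, $u \mapsto v^a$, $x \mapsto w^b$, whose ramification divisor $(a-1)\{v = 0\} + (b-1)\{w = 0\}$ cancels the boundary of $\Delta_L$ and which carries $t\,\divisor(u^\ell + x^\ell)$ back to $t\,\divisor(v^{a\ell} + w^{b\ell})$; when $p \nmid ab$ this map is tamely ramified and split, and the transformation rule for strong $F$-regularity under such covers gives the lemma directly, while the wildly ramified case $p \mid ab$ is precisely where that route is delicate and the direct computation above is to be preferred.
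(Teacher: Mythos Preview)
Your argument has a subtle but genuine gap in the criterion you invoke. The formula $\tau(R,\Delta) = \bigcup_{e \geq 1} \bigl( \prod_i g_i^{\lceil (p^e - 1)c_i\rceil}\bigr)^{[1/p^e]}$ is not correct, and the condition ``$\prod_i g_i^{\lceil (p^e - 1)c_i\rceil} \notin \mathfrak m^{[p^e]}$ for some $e$'' characterizes \emph{sharp $F$-purity}, not strong $F$-regularity. The correct $F$-regularity criterion on a regular ring---and the one the paper records just before the lemma---uses the rounding $\lceil p^e c_i\rceil$. For a sanity check, take $R = k\llbracket x\rrbracket$ and $\Delta = \Div(x)$: your criterion gives $x^{p^e-1} \notin (x^{p^e})$ for every $e$, declaring the pair $F$-regular, whereas $\tau(R,x) = (x) \neq R$. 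So what your clean monomial-matching computation actually establishes is that the two pairs are sharply $F$-pure simultaneously, for every value of $t$---a true and useful statement, but not the one asserted.

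The paper closes exactly this gap. Its second proof uses the correct $\lceil p^e c_i\rceil$ rounding; as you yourself observe, the two monomial conditions then fail to match at each fixed $e$ (the left becomes $\ell k < \lfloor p^e/a\rfloor$ rather than $a\ell k < p^e$), and the paper absorbs the off-by-one by introducing a test-element perturbation $c = \tilde u^a\tilde x^b$ on the right-hand side and using that $(u,x)$ is a regular sequence. Its first proof is the finite-cover transformation rule for $\tau$ that you sketch at the end, which handles the wild case $p\mid ab$ without issue. An alternative repair of your own approach would be to keep the $(p^e-1)$-computation as establishing equality of the sharp-$F$-pure loci in $t$, and then argue that on a regular ring the $F$-pure threshold coincides with the first jumping number of $\tau$, so that the strongly-$F$-regular loci (namely $\{t : t < \mathrm{fpt}\}$ for each family) also coincide; but that step still needs to be supplied.
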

We give two proofs.  First, we point out this is a consequence of general theorems about the behavior of $F$-singularities under finite maps. For the second, we do a down-to-earth computation.
\begin{proof}[Proof \#1]
    Consider the extension $A = k\llbracket u', x'\rrbracket = k\llbracket u^a, x^b\rrbracket \subseteq k\llbracket u,x\rrbracket = R$ with induced map $g$ on $\Spec$.  We know that $(R, u^{1-1/a} x^{1 - 1/b} (u^\ell +x^\ell )^t)$ is strongly $F$-regular if and only if
    \begin{equation}
        \label{eq.ExtensionWithRamLikeDivisor}
        \Big(A, {a - 1 \over a} \Div(u') + {b-1 \over b} \Div(x'),  (u'^\ell + x'^{\ell})^t\Big)
    \end{equation}
    is.  Consider the map $\bT : k[u,x] \to k[u',x']$ sending $1$ to $1$ and the other monomial basis elements to zero.  The associated ramification-like-divisor is $\sR_{\bT} := (a-1)\Div(u) + (b-1)\Div(x)$ as in \cite{SchwedeTuckerTestIdealFiniteMaps}.  As 
    \[
        g^*\big({a - 1 \over a} \Div(u') + {b-1 \over b}\Div(x')\big) - \sR_{\bT}  =0 
    \]
    by \cite[Theorem 6.25]{SchwedeTuckerTestIdealFiniteMaps}, we see that
    \[ 
        \bT \big( \tau(R, (u^{a\ell} + x^{b \ell})^t) \big)= \tau\big(A, {a - 1 \over a} \Div(u') + {b-1 \over b} \Div(x'),  (u'^\ell + x'^{\ell})^t\big).
    \] 
    Hence, $(R, (u^{a\ell} + x^{b \ell})^t)$ being strongly $F$-regular implies that so is \autoref{eq.ExtensionWithRamLikeDivisor}, as $\bT$ surjects.  The converse follows via an argument of \cite{CarvajalRojasFiniteTorsors} as $\bT$ sends the maximal ideal of $R$ into the maximal ideal of $A$.  Indeed, if $\tau(R, (u^{a\ell} + x^{b \ell})^t)$ is contained in the maximal ideal of $R$, then its image is in the maximal ideal of $A$.
\end{proof}

\begin{proof}[Proof \#2]
	We first claim that the pair on the left is $F$-regular if and only if one has 
	\begin{equation} \label{eq-left-Freg}
	(u^{\ell} + x^{\ell})^{\lceil t p^e \rceil} \notin \big( u^{\lfloor p^e / a \rfloor}, x^{\lfloor p^e / b \rfloor} \big)
	\end{equation}
	for some integer $e \geq 0$. Indeed, since we work over a regular ring, the pair is $F$-regular precisely when the map $R \to F^e_* R$ given by $F^e_* u^{\lceil p^e(1 - 1/a) \rceil} x^{\lceil p^e(1 - 1/b) \rceil} (u^\ell + x^\ell)^{\lceil t p^e \rceil}$ splits or, equivalently, when
	$$u^{\lceil p^e(1 - 1/a) \rceil} x^{\lceil p^e(1 - 1/b) \rceil} (u^\ell + x^\ell)^{\lceil t p^e  \rceil} \notin (u^{p^e}, x^{p^e})$$
	for some integer $e \geq 0$. The claim then follows from the fact that $(u, x)$ is a regular sequence.

	Now let $\tilde R \ceq k \llb \tilde u, \tilde x \rrb$ be a new power series ring, and consider the continuous $k$-algebra homomorphism $\phi: R \to \tilde R$ given by $\phi(u) =  \tilde u^a$ and $\phi(x) = \tilde x^b$. Note that $\phi(u^\ell + x^\ell) = \tilde u^{a \ell} + \tilde x^{b \ell}$ and that for every integer $e \geq 0$ we have
	\begin{equation} \label{eq-phi-inverse}
		(u^{\lceil p^e / a \rceil}, x^{\lceil p^e/ b\rceil}) = \phi^{-1}\big( (\tilde u^{p^e}, \tilde x^{p^e}) \big).
	\end{equation}
	Therefore, when the pair on the left is $F$-regular,  \autoref{eq-left-Freg} and \autoref{eq-phi-inverse} give $(u^\ell + x^\ell)^{\lceil t p^e \rceil} \notin \phi^{-1} (\tilde u^{p^e}, x^{p^e})$ for some $e \geq 0$ or, equivalently, $(\tilde u^{a \ell} + \tilde x^{b \ell})^{\lceil t p^e \rceil} \notin (\tilde u^{p^e}, \tilde x^{p^e})$, thus giving that the pair on the right is $F$-regular.

	Now assume that the pair on the right is $F$-regular. By introducing the perturbation $c \ceq \tilde u^a \tilde x^b = \phi(ux)$, we know there is some integer $e \geq 0$ such that
	\[
        \phi \big( ux \cdot (u^\ell + x^\ell)^{\lceil t p^e \rceil} \big) = c \cdot (\tilde u^{a \ell} + \tilde x^{b \ell})^{\lceil t p^e \rceil} \notin (\tilde u^{p^e}, \tilde x^{p^e})
    \]
    or, equivalently, $ux \cdot (u^\ell + x^\ell)^{\lceil tp^e \rceil} \notin \phi^{-1} (\tilde u^{p^e}, \tilde x^{p^e})$. With \autoref{eq-phi-inverse}, we get $ux \cdot (u^\ell + x^\ell)^{\lceil tp^e \rceil} \notin (u^{\lceil p^e / a \rceil}, x^{\lceil p^e/ b\rceil}) $ and using once again that $(u, x)$ is a regular sequence, we conclude that 
	$$(u^\ell + x^\ell)^{\lceil tp^e \rceil} \notin (u^{\lceil p^e / a \rceil - 1}, x^{\lceil p^e / b \rceil - 1}).$$
    But this implies condition \eqref{eq-left-Freg}  and so we see that  that the pair on the left is $F$-regular by our first claim.
\end{proof}

\begin{remark}
     Proof \#1 also works in mixed characteristic for a slightly different statement.  It shows that $A = \bZ_p\llbracket x\rrbracket$ and $R = A[p^{1 \over a}, x^{1 \over b}]$ then
    \[
        \text{$(A, p^{1-1/a}x^{1-1/b}(p^{\ell} + x^{\ell})^t)$ is $+$-(or BCM-)regular } \Longleftrightarrow \text{ $(R, ((p^{{1 \over a}})^{a\ell} + (x^{1 \over b})^{b\ell})^t)$ is $+$-(or BCM-)regular.}
    \]
    The point is we have transformation rules for $+$-test ideals and BCM-test ideals by \cite{MaSchwedeSingularitiesMixedCharBCM}.
\end{remark}

We now study what a particular blowup of our singularity looks like.

\begin{lemma}
    \label{lem.StructureOfToricBlowup}
	Suppose $(R, \frm = (u,x), k)$ is a complete regular 2-dimensional local ring and consider $f = u^{\ell a} + x^{\ell b} \in R$ where $a$ and $b$ are relatively prime.  Let $\pi : X \to \Spec R$ be the normalized blowup of the ideal $(u^a, x^b)$.  Set $D$ to be the strict transform of $\Div(f)$ on $X$.  Then we have the following.  

	\begin{enumerate}
		\item $X$ has a unique reduced exceptional divisor $E \cong \bP^1_k$.\label{lem.StructureOfToricBlowup.ExceptionalIsP1}
		\item $-E$ is a $\pi$-ample Weil divisor.\label{lem.StructureOfToricBlowup.ExceptionalIsAntiample}
        \item $K_{X / R} =  (a+b-1)E$ and $\Div(f)$ pulls back to $D +(\ell ab) E$. 
 \label{lem.StructureOfToricBlowup.RelativeCanonical}
            \item The different of $K_X + E$ restricted to $E$ is $(1-{1 \over a})P_1 + (1-{1 \over b})P_2$ for some points $P_1, P_2$ on $E$.  \label{lem.StructureOfToricBlowup.DifferentComputation}

	    \item $D$ intersects $E$ at nonsingular points of $X$ (in particular, away from $P_1$ and $P_2)$.  Explicitly, if we consider the homogeneous coordinate ring $k[y,z]$ for $E$ with $V(y) = P_1$ and $V(z) = P_2$ ($0$ and $\infty$), then we have that $D$ restricted to $E$ is given by the equation $(y^\ell + z^{\ell})$. 
    \label{lem.StructureOfToricBlowup.StrictTransformIntersection}
    
\end{enumerate}
\end{lemma}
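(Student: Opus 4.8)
The plan is to note that the entire setup is toric and to read parts (i)--(v) off the combinatorics, performing one explicit chart computation for the parts involving $f$. We may assume $R = k\llbracket u,x\rrbracket$, so that $\Spec R$ is the formal neighbourhood of the fixed point of $\bA^2 = \Spec k[u,x]$, the ideal $(u^a,x^b)$ is monomial, and by the standard description of the normalized blowup of a monomial ideal, $X$ is the toric surface whose fan $\Sigma$ is the (inner) normal fan of $\mathrm{Newt}(u^a,x^b) = \mathrm{conv}\{(a,0),(0,b)\}+\R_{\geq 0}^2$. Since $\gcd(a,b)=1$, this polyhedron has exactly the two vertices $(a,0),(0,b)$ joined by a single bounded edge with primitive inner normal $v \ceq (b,a)$, so $\Sigma$ is obtained from $\mathrm{Cone}(e_1,e_2)$ by inserting the ray $\R_{\geq 0}v$, with maximal cones $\sigma_1 \ceq \mathrm{Cone}(e_1,v)$ and $\sigma_2 \ceq \mathrm{Cone}(v,e_2)$. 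I would record at the outset that $\pi\colon X\to\Spec R$ is projective birational, $X$ is a normal $\bQ$-factorial toric surface with $\Sing X \subseteq \{P_1,P_2\}$ (the two torus-fixed points, in $U_{\sigma_1}$ and $U_{\sigma_2}$), the unique exceptional prime divisor is the toric divisor $E$ corresponding to $\R_{\geq 0}v$, and the two remaining toric prime divisors are the strict transforms of $V(u)$ and $V(x)$.

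Parts (i)--(iii) then come from the toric dictionary. For (i): $E$ is the toric variety of the star of the ray $\R_{\geq 0}v$, a complete fan with two rays, hence $E \cong \bP^1_k$ (this is re-proved concretely below). For (ii): $(u^a,x^b)\mathcal{O}_X = \mathcal{O}_X(-abE)$ is invertible (defining property of the blowup, preserved by normalization) and $\pi$-ample, being $\mathcal{O}(1)$ of the blowup; hence $abE$ is Cartier and $-E$ is $\pi$-ample (alternatively, $E$ is the only curve contracted by a projective birational morphism of normal surfaces, so $E^2 < 0$). For (iii): the toric canonical divisor is $K_X = -E - \widetilde{V(u)} - \widetilde{V(x)}$, while $\pi^*V(u) = \widetilde{V(u)} + bE$ and $\pi^*V(x) = \widetilde{V(x)} + aE$ (the multiplicities being $\ord_E(u) = b$, $\ord_E(x) = a$); using toric representatives, $K_{X/R} = K_X - \pi^* K_R = (a+b-1)E$. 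The remaining assertion, that $\Div(f)$ pulls back to $D + (\ell ab)E$, amounts to $\ord_E(f) = \ell ab$ with no cancellation, which drops out of the next step.

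For the rest of (iii) and for (v) I would pass to the smooth toric chart over $U_{\sigma_1}$. With $N' \ceq \bZ e_1 + \bZ v \subseteq \bZ^2 = N$ (of index $a$), the chart $U_{\sigma_1,N'}$ is $\bA^2_{\xi,\eta}$, the quotient presentation is $U_{\sigma_1} = \bA^2_{\xi,\eta}/G$ for the cyclic group $G = N/N' \cong \bZ/a$ acting by $(\xi,\eta)\mapsto(\zeta^{-b}\xi,\zeta\eta)$, and the composite $\bA^2_{\xi,\eta}\to U_{\sigma_1}\to\Spec R$ is the monomial map $u\mapsto\xi\eta^b$, $x\mapsto\eta^a$, under which $E$ pulls back to $\{\eta=0\}$. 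Then
\[
	f = u^{\ell a} + x^{\ell b}\ \longmapsto\ \xi^{\ell a}\eta^{\ell ab} + \eta^{\ell ab} = \eta^{\ell ab}(\xi^{\ell a}+1),
\]
so $\ord_E(f) = \ell ab$ (finishing (iii)) and the strict transform $D$ is cut, on this cover, by $\xi^{\ell a}+1$. This vanishes only where $\xi^{\ell a}=-1$, i.e. with $\xi\neq 0$, so $D$ meets $E$ away from $P_1$, and $G$ acts freely at such points (its stabilizer there is $\{\zeta^j : a\mid bj\} = \{1\}$ as $\gcd(a,b)=1$), so $D$ meets $E$ at smooth points of $X$. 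On $G$-invariants, $E\cap U_{\sigma_1} = \Spec k[\xi^a] = \bA^1_s$ with $s\ceq\xi^a$ and $P_1 = \{s=0\}$, and $\xi^{\ell a}+1 = s^\ell+1$ descends, so $D|_E$ is cut by $s^\ell+1$ near $P_1$. Repeating at $P_2$ (with $u\leftrightarrow x$, $a\leftrightarrow b$) gives $D|_E$ cut by $s'^\ell+1$ near $P_2$, $s'\ceq\xi'^b$; since $ss' = (u^a/x^b)(x^b/u^a) = 1$ on the overlap, $E$ is obtained by gluing $\bA^1_s$ and $\bA^1_{s'}$ via $s' = 1/s$, so $E\cong\bP^1_k$, and in homogeneous coordinates $[y:z]$ with $s = y/z$ we get $V(y) = P_1$, $V(z) = P_2$, and $D|_E$ cut by $y^\ell + z^\ell$ (which meets neither $P_1$ nor $P_2$). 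This is (v), and it re-proves (i).

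Finally, for (iv): the chart description shows $X$ has cyclic quotient singularities of orders $\det(e_1,v) = a$ at $P_1$ and $\det(v,e_2) = b$ at $P_2$, and is smooth elsewhere, with $E$ a toric boundary curve through each on which the relevant cyclic group acts faithfully (again using $\gcd(a,b)=1$). By the standard computation of the different of such a toric pair --- equivalently, restricting $K_X+E$ to the smooth covers, where the different is trivial, and comparing via adjunction and Riemann--Hurwitz along the totally ramified degree-$a$ (resp. degree-$b$) map $\bA^1_\xi\to\bA^1_s$ (resp. $\bA^1_{\xi'}\to\bA^1_{s'}$) of smooth curves --- the different of $(K_X+E)|_E$ has coefficient $1-\tfrac1a$ at $P_1$ and $1-\tfrac1b$ at $P_2$ and is zero elsewhere, i.e. $(K_X+E)|_E = K_E + (1-\tfrac1a)P_1 + (1-\tfrac1b)P_2$. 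I expect this different computation to be the main technical point: one has to pin down the local analytic type of the two singularities (keeping straight which has order $a$ and which order $b$) and check that the ``$1-1/m$'' formula for the different of a toric divisor through a $\tfrac1m(1,q)$-singularity applies with the correct $m$ and remains valid in arbitrary characteristic, so that the wildness of the cyclic covers when $p\mid a$ or $p\mid b$ does not cause trouble. Everything else is routine toric bookkeeping.
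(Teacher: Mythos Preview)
Your toric argument is clean and the chart computations are correct, but there is a genuine gap at the very first step: you write ``We may assume $R = k\llbracket u,x\rrbracket$,'' and this reduction is not valid. The lemma is stated for an arbitrary complete regular $2$-dimensional local ring, and the paper's sole application is to $R = \bZ_p\llbracket x\rrbracket$ with $u = p$ (see \autoref{prop.fpt<=+pt<=lct}). In mixed characteristic $R$ is not a power series ring over its residue field, $\Spec R$ is not a toric variety over $k$, and there is no base change or deformation argument that lets you pass to equal characteristic here. All of the toric dictionary you invoke (the fan description of the normalized blowup, $K_X = -\sum D_\rho$, the $\tfrac1m(1,q)$ quotient presentation, Riemann--Hurwitz for the different) is stated for varieties over a field, so as written none of it applies to the case the paper actually needs.

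That said, your strategy is salvageable: the charts of the normalized blowup of a monomial ideal in any regular local ring are still semigroup rings in the regular parameters, and your explicit computation $f \mapsto \eta^{\ell ab}(\xi^{\ell a}+1)$ goes through verbatim once one sets up the cover correctly over $R$ rather than over $k$. The paper does exactly this, but by hand rather than by citing toric machinery: it introduces the finite extension $S = R[u^{1/b}, x^{1/a}]$ together with the monomial splitting $\phi\colon S \to R$, blows up the maximal ideal of $S$ to get a smooth $Y$ with a finite map $\beta\colon Y \to X$, and then reads off normality of $X$, the description of $E$, the relative canonical, and the different by pushing down along $\phi$ (which plays the role of your $G$-quotient but makes sense even when $p \mid ab$ and no group action is available). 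In effect the paper reproves, in a characteristic-free way, precisely the toric facts you wanted to quote; your argument would become correct if you replaced ``we may assume $R = k\llbracket u,x\rrbracket$'' with this construction and verified the canonical and different formulas via $\phi$ rather than via the toric dictionary and Riemann--Hurwitz.
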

This lemma shows that the normalized blowup of $(u^a,x^b)$ behaves exactly as one expects from the toric setting.  Those willing to believe that may wish to skip the proof on first reading.
\begin{proof}
    Consider the finite extension $S = R[u^{1 \over b}, x^{1 \over a}]$ of $R$ (with maximal ideal $\frn \subseteq S$) and consider the map $\phi : S \to R$ sending $1$ to $1$ and the other natural basis elements $u^{i/b}x^{j/a} \mapsto 0$ for $0 \leq i < b$ and $0 \leq j < a$.  Consider the maximal ideal $\frn = (u^{1/b}, x^{1/a})S \subseteq S$ and notice that 
    \[ 
        \frn^{ab} \cap R = \phi(\frn^{ab}) = (u^i x^j\;|\; i,j \geq 0, ib + ja \geq ab) =: J.
    \]
    As $\frn^{ab}$ is a valuation ideal (associated to blowing up the origin), we see that so is $J \subseteq R$.  In particular, $J$ is integrally closed.
    As $R$ is regular and 2-dimensional, $J^n$ is also integrally closed for all $n$ by a result of Zariski, see \cite[Theorem 14.4.4]{HunekeSwansonIntegralClosure}.  Let $X$ denote the blowup of $J$, and $Y$ the blowup of $\frn^{ab}$ (which is the same as the blowup of $\frn$, although the induced relatively very ample divisors have different coefficients), and there is a finite map $\beta : Y \to X$ induced by the inclusion of Rees algebras $R[Jt] \subseteq S[\frn^{ab}t]$.  Note $\phi$ extends to these Rees algebras and hence also induces $\phi : f_* \cO_Y \to \cO_X$.  It easily follows that $Y$ is smooth, $X$ is normal and Cohen-Macaulay (it is a summand of $f_* \cO_Y$), and since $Y$ has a unique reduced exceptional divisor $E_Y$, we have that $X$ has a unique reduced exceptional divisor $E$.  We have the following diagram.
    \[
        \xymatrix{
        & E_Y \ar@{_{(}->}[d] \ar[r]^{\gamma} & E \ar@{_{(}->}[d]\\
             & Y \ar[d]_{\pi_Y} \ar[r]^{\beta} & X \ar[d]^{\pi}   & \\
            & \Spec S \ar[r]_{\alpha} & \Spec R
        }
    \]

    On $K(S) = K(Y)$, the exceptional divisor induces a discrete valuation $v : K(S)^{\times} \to \bZ$ which takes $u^{1 \over b} \mapsto {1}, x^{1 \over a} \mapsto {1}$, and sends units to $0$ (it can also be viewed as the $\frn$-adic order on $S$).  Consider a monomial $g = u^{i}x^j = (u^{1 \over b})^{ib} (x^{1 \over a})^{ja} \in R$.  Notice that $v(g) = ib + ja$.  Therefore, as $a, b$ are relatively prime, there exist monomials $h \in K(R)$ with $v(h) = 1$.  

    We consider the charts of $X$ and $Y$ corresponding to $u^a$ and $x^b$, which we label $X_u$, $Y_u$, $X_x$, and $Y_x$ respectively.  As it is easy to see that $Y_u = \Spec S[x^{1\over a}/u^{1\over b}] = \Spec S[u^{i\over b} x^{j\over a} \;|\; i+j \geq 0, j\geq 0] = \Spec S^u$, one applies $\phi$ and sees that $X_u = \Spec  R[u^{i}x^{j} \;|\; ib+ ja \geq 0, j \geq 0] = \Spec R^u$.  Similarly $X_x = \Spec R[u^ix^j \;|\; ib + ja \geq 0, i \geq 0] = \Spec R^x$.  

    On $Y_u$, the exceptional divisor $E_Y$ is defined by the principal ideal $(u^{1 \over b})$.       
 As $x^{1 \over a} / u^{1 \over b}$ is in the ring $S^u$, $(u^{1 \over b}) S^u$ is also generated as a module over $R$ by the monomials $(u^{i \over b} x^{j \over a} \;|\; i+j > 0)S$ (and the analogous result holds for $Y_x$).  Hence, applying $\phi$ on $X_u$ (respectively $X_x$) or using the valuation $v$, the ideal defining $E$ is generated by the following monomials, even using scalars from $R$, 
    \[ 
        I_{E,u} = (u^ix^j \in R^u \;|\; bi + aj > 0)R \;\;\;(\text{respectively }\; I_{E,x} = (u^ix^j \in R^x \;|\; bi + aj > 0)R).
    \] 
    A direct computation now easily implies that the exceptional divisor $E$ is isomorphic to $\bP^1_k$. 
 Indeed, $x^b/u^a \notin I_{E,u}$, and it is not difficult to see that $R^u/I_{E,u} \cong k[x^b/u^a]$, while $R^x/I_{E,x} \cong k[u^a/x^b]$. By construction, $E$ is an anti-ample Weil divisor.  In particular, we see that \ref{lem.StructureOfToricBlowup.ExceptionalIsP1} and \ref{lem.StructureOfToricBlowup.ExceptionalIsAntiample} hold.  

    We observed above that the valuation ring $T \subseteq K(R)$ associated to $E$ (or equivalently to $v|_{K(R)}$) has elements $h$ of value $1$.  Set $T_Y \subseteq K(S)$ to be the valuation ring associated to $v$ (equivalently, associated to $E_Y$).  Hence $T \subseteq T_Y$ have a common monomial uniformizer $h$ but has an extension of residue fields.  Note $\phi : T_Y \to T$ sends monomials of $T$ to themselves and is nonzero on the associated residue field extension.  We see thus that the divisor associated to $\phi \in \Hom_T(T', T) = \omega_{T'/T}$ has coefficient zero along $E$. 
    
    We now compute canonical divisors.  If we set $K_{R} = -\Div(u) - \Div(x)$, then associated to the map $\phi \in \sHom( S, R)$ we obtain an effective divisor $K_{S/R} = {(b-1)} \Div(u^{1 \over b}) + {(a-1)} \Div(x^{1 \over a}) = {b-1 \over b} \Div(u) + {a-1 \over a} \Div(x)$ (this is the ramification divisor if $p$ is relatively prime to $a$ and $b$, otherwise it can be thought of as a replacement for the ramification divisor, see \cite{SchwedeTuckerTestIdealFiniteMaps} for related discussion).  We can then set $K_{S} = \alpha^* K_{ R} + K_{S/R} = -\Div(u^{1 \over b}) - \Div(x^{1 \over a})$.  
    Associated to the blowup $\pi_Y$, we see that $K_Y = -E_Y - D_1 - D_2$ where $D_1$ and $D_2$ are the strict transforms of $\Div(u^{1 \over b})$ and $\Div(x^{1 \over a})$.   Viewing $\phi \in \sHom(f_* \cO_Y, \cO_X)$, we then obtain that $K_{Y/X} = {(b -1)} D_1 + {(a-1)} D_2 + 0E_Y$ where coefficient of $E$ was computed above.  From the formula $K_Y = K_{Y/X} + \beta^* K_X$ we see that $K_X = -E - D_x - D_u$ where $D_x$ and $D_y$ are the strict transforms of $\Div(x)$ and $\Div(u)$.  This is what is expected based on the toric setting.

    For \ref{lem.StructureOfToricBlowup.RelativeCanonical}, we compute the relative canonical  and the pullback of $\Div(f)$.  Observe that 
    \[
        \begin{array}{rl}
            \beta^* K_{X / R} = & \beta^*\big( K_X - \pi^* (-\Div(ux)) \big)\\
            = & \beta^*(-E - D_x - D_u) + \pi_Y^*(a \Div(u^{1 \over a}) + b \Div(b^{1 \over b}))\\
            = & -E_Y - a D_1 - b D_2 + {a+b}E_Y - a D_1 - b D_2\\
            = & (a + b - 1) E_Y            
        \end{array}
    \]
    As $\beta^* E = E_Y$, this implies that $K_{X/R} = (a+b-1)E$.  Next, we compute the pullback of $\Div(f)$.  By pulling back to $Y$ and employing a similar argument, we see that $\pi^* \Div(f) = (\ell ab)E + D$.  

     We can now compute the different.  Note, $K_X+E$ has coefficient zero along $E$ and so it suffices to compute $(K_X + E)|_E \sim_{\bQ} K_E + \mathrm{Diff}$ in a specific way.  We do our computation on the chart $U := X_u$ as the other chart is similar.  First, we can pick a section $s = x^b/u^a \in \cO_U(K_X+E)$.  This produces a divisor $D_s = (b-1)D_x|_U \sim (K_X + E)|_U$ on this chart.  Consider the image of this section $\overline{s}$ under the canonical map $\omega_U(E) \twoheadrightarrow \omega_E$.  Viewing these as ideal sheaves, we see that while $s$ does not generate $\omega_U(E) = \cO_U(-D_x)$, its image generates the image of $\omega_E$ of $\omega_U(E)$, which is the ideal $(x^b/u^a) \in k[x^b/u^a]$.
     Therefore, we see that the associated divisor $D_{\overline{s}} = 0 \sim K_E$.  
     The different is thus computed as 
     \[
        (D_s)|_E = D_{\overline{s}} + {\mathrm{Diff}}.
     \]
     As $D_s = (b-1)D_x|_U = {b - 1 \over b} \Div(x^b/u^a)$ we see that the different $\Diff$ is ${b - 1 \over b} P_1$ where $P_1 = D_1 \cap E$ (the origin in this chart). The analogous computation shows that on the other chart, the different is ${a -1 \over a}P_2$.  This proves \ref{lem.StructureOfToricBlowup.DifferentComputation}.

     For the final statement \ref{lem.StructureOfToricBlowup.StrictTransformIntersection}, we simply need to understand how $D$, the strict transform of $\Div(f)$, restricts to $E \cong \bP^1_k$.  We again work on the chart $U = X_u$ and observe $f = u^{\ell a} + x^{\ell b} = u^{\ell a} \big(1 + ({x^{b} \over u^{a}})^{\ell}\big)$.  As $1 + ({x^{b} \over u^{a}})^{\ell}$ does not vanish along the exceptional divisor, we see it must be the equation of the strict transform.  As the analogous result holds on the chart $X_x$, the result follows.  
\end{proof}

\begin{proposition}
\label{prop.fpt<=+pt<=lct}
	Let $R \ceq \Z_p \llb x \rrb$, let $a, b \geq 2$ be  integers, and consider the polynomial $f \ceq p^{a} + x^{b} \in R$.  Consider $R_0 \ceq \F_p \llb y, x \rrb$ and $f_0 \ceq y^{a} + x^{b} \in R_0$. Then we have
	$$\fpt(R_0, f_0) \leq \ppt(R, f) \leq {1 \over a} + {1 \over b}.$$
\end{proposition}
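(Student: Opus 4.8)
The plan is to prove the two inequalities separately, in each case generalizing to arbitrary $a,b\ge 2$ the argument recalled in the introduction for the cusp $p^2+x^3$.

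\emph{The upper bound $\ppt(R,f)\le\frac1a+\frac1b$.} Suppose $t\in\bQ_{>0}$ is such that $f^t\colon R\to R^+$ is pure. Then, just as in the introduction, \cite[Proposition 4.17, Lemma 4.18]{BMPSTWW1} (or \cite[Theorem 6.21]{MaSchwedeSingularitiesMixedCharBCM}) together with \cite{BhattAbsoluteIntegralClosure} show that $\tau_+(R,f^t)=R$ is contained in the multiplier ideal $\cJ(R,t\Div(f))$, so that $(R,t\Div(f))$ is log terminal. It therefore suffices to exhibit a single prime divisor over $\Spec R$ whose log discrepancy bounds $t$. Write $d=\gcd(a,b)$, $a=da'$ and $b=db'$, and apply \autoref{lem.StructureOfToricBlowup} with $u=p$, with the (coprime) exponents taken to be $(a',b')$, and with $\ell=d$, so that $f=p^{\ell a'}+x^{\ell b'}$. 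The normalized blowup $\pi\colon X\to\Spec R$ of $(p^{a'},x^{b'})$ has a unique exceptional prime divisor $E$, with $K_{X/R}=(a'+b'-1)E$ and with $\Div(f)$ pulling back to $D+(d\,a'b')E$. Consequently the log discrepancy of $E$ with respect to $(R,t\Div(f))$ equals $(a'+b')-t\,d\,a'b'$; log terminality forces this to be positive, giving $t<\frac{a'+b'}{d\,a'b'}=\frac1a+\frac1b$, and taking the supremum over all such $t$ yields the claim.

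\emph{The lower bound $\fpt(R_0,f_0)\le\ppt(R,f)$.} Fix a rational $t<\fpt(R_0,f_0)$, so $(R_0,f_0^t)$ is $F$-regular; we must show $f^t\colon R\to R^+$ splits, equivalently (by \autoref{lem.EquivPurityForCompletionIdealContainment}, using that $R$ is regular) that $(p^a+x^b)^t\notin\fm\widehat{R^+}$, where $\widehat{R^+}$ is the $p$-adic completion of $R^+$. My plan, which recovers \cite[Example 7.10]{MaSchwedeTuckerWaldronWitaszekAdjoint} when $(a,b)=(2,3)$, is to pass to characteristic $p$ by tilting. By \cite{BhattAbsoluteIntegralClosure}, $\widehat{R^+}$ is a perfectoid big Cohen-Macaulay $R$-algebra, so its tilt $B_0\ceq(\widehat{R^+})^\flat$ is a perfect big Cohen-Macaulay algebra over $R_0=\F_p\llb y,x\rrb$ via $y\mapsto p^\flat=(p,p^{1/p},p^{1/p^2},\dots)$ and $x\mapsto x^\flat$. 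Since $R_0$ is regular, $F$-regularity of $(R_0,f_0^t)$ means $\tau(R_0,f_0^t)=R_0$, and a standard comparison of test ideals with big Cohen-Macaulay test ideals in characteristic $p$ then shows that $f_0^t\colon R_0\to B_0$ is pure, i.e. $\big((p^\flat)^a+x^b\big)^t\notin\fm_{R_0}B_0$. It remains to untilt this non-containment.

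The step I expect to be the main obstacle is exactly this untilting: the sharp of $(p^\flat)^a+x^b\in B_0$ agrees with $p^a+x^b$ only modulo $p\widehat{R^+}$, so the tilt does not literally identify $f_0$ with $f$. I would handle this by a perturbation argument of the same sort used to compare $\ppt(R,f)$ with the perfectoid pure threshold in the remark after \autoref{lem.EquivPurityForCompletionIdealContainment}: replace $t$ by $t+\epsilon$ for $1\gg\epsilon>0$ (still below $\fpt(R_0,f_0)$, so the non-containment above persists), translate the membership of the relevant $p$-power and ordinary roots through the tilt using \autoref{lemma-key}, and absorb the error term $p\widehat{R^+}$ at the cost of shrinking $\epsilon$. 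This gives $(p^a+x^b)^t\notin\fm\widehat{R^+}$, hence $f^t\colon R\to R^+$ splits; letting $t\to\fpt(R_0,f_0)$ completes the argument.
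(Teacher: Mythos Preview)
Your upper bound is correct and is exactly the paper's argument: both compute the discrepancy along the exceptional divisor of the weighted blowup via \autoref{lem.StructureOfToricBlowup} and use that $+$-regular implies KLT.

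For the lower bound your approach diverges from the paper's, and the step you flag as the main obstacle is a genuine gap that your perturbation does not close. The paper does not tilt. It stays on the same blowup $\pi\colon X\to\Spec R$: by \autoref{lem.StructureOfToricBlowup} parts \ref{lem.StructureOfToricBlowup.DifferentComputation} and \ref{lem.StructureOfToricBlowup.StrictTransformIntersection} the pair $(E,\Diff_E(tD))$ on the exceptional $E\cong\bP^1_{\F_p}$ has a section ring whose associated pair is precisely the left-hand pair of \autoref{lem.FPTCoverEqualityWithRamification} (with exponents $a',b'$ and $\ell=d$), which that lemma shows is $F$-regular exactly when $(R_0,f_0^t)$ is; then the mixed-characteristic inversion of adjunction \cite[Lemma~7.2]{MaSchwedeTuckerWaldronWitaszekAdjoint} lifts global $F$-regularity on $E$ to $+$-regularity of $(R,f^t)$. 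Note that \cite[Example~7.10]{MaSchwedeTuckerWaldronWitaszekAdjoint} is itself this adjunction argument for the cusp, not a tilting computation, so your plan does not in fact ``recover'' it. As for your proposal: even granting that $((p^\flat)^a+(x^\flat)^b)^{t+\epsilon}\notin(p^\flat,x^\flat)B_0$, this controls $\tilde f:=((p^\flat)^a+(x^\flat)^b)^\sharp$, not $f=p^a+x^b$, and these agree only modulo $p\widehat{R^+}$. Applying \autoref{lemma-key} to $\tilde f\in(p,f)$ or $f\in(p,\tilde f)$ yields generators $p^{\alpha}(\cdot)^{\beta}$ with $\alpha$ possibly as small as $1/p^e$, and $p^{1/p^e}\notin(p,x)\widehat{R^+}$, so these error terms are not absorbed by shrinking $\epsilon$. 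Nothing in your outline supplies a mechanism to transfer non-membership in $\fm\widehat{R^+}$ from a fractional power of $\tilde f$ to one of $f$.
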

\begin{proof}
We may write $a = \ell A$ and $b = \ell B$, where $\ell=\gcd(a,b)$, so that $A$ and $B$ are relatively prime.  Consider the normalized blowup $\pi : X \to \Spec R$ of $(p^A, x^B)$ as in \autoref{lem.StructureOfToricBlowup}.  Note, every $+$-regular pair is automatically KLT  by \cite[Theorem 6.21]{MaSchwedeSingularitiesMixedCharBCM} ($\widehat{R^+}$ is big enough for that containment), or use Matlis duality and \cite[Definition 4.16]{BMPSTWW1}.  Hence, the computation of \autoref{lem.StructureOfToricBlowup} \ref{lem.StructureOfToricBlowup.RelativeCanonical} implies that $$\ppt(f) \leq \sup\{ t \;|\; A + B - 1 - t \ell AB > -1 \} = {1 \over \ell A} + {1 \over \ell B} = {1 \over a} + {1 \over b}$$ as desired. 
 Indeed, it is not hard to see that this is the log canonical threshold.

    For the other inequality, let $E$ be the exceptional divisor of $\pi$ and let $D$ denote the strict transform of $\Div(f)$.  Suppose now that $t < \fpt(f_0)$.  By \autoref{lem.FPTCoverEqualityWithRamification}, we see that $t < \fpt(R_0, y^{1-1/A}x^{1-1/B}(u^\ell + x^{\ell})^t)$.  Using  \autoref{lem.StructureOfToricBlowup} \ref{lem.StructureOfToricBlowup.DifferentComputation} and \ref{lem.StructureOfToricBlowup.StrictTransformIntersection}, we see that $(E, \Diff_E(tD))$ has a section ring whose associated pair is $(R_0, y^{1-1/A}x^{1-1/B}(u^\ell + x^{\ell})^t)$.  In particular, $(E, (K_X + E + tD)|_E)$ is globally $F$-regular and so \cite[Lemma 7.2]{MaSchwedeTuckerWaldronWitaszekAdjoint} implies that $(R, f^t)$ is $+$-regular (this also follows from \cite[Theorem 7.2]{BMPSTWW1}).  The result follows.
\end{proof}

\begin{remark}
    One can also use direct computation to prove that $\ppt(f) \leq {1 \over a} + {1 \over b}$.  We sketch this here.

    Let $\zeta \in R^+$ be a primitive $(2ab)$-th root of unity, let $e \geq 0$ be an integer and $\gamma \in \frac{1}{p^e} \Z$ be a rational number whose denominator is $p^e$. We then have
	\begin{align*}
		f^\gamma = (p^{ab/b} + x^{ab/a})^\gamma & = \prod_{i = 0}^{ab - 1} (p^{1/b} - \zeta^i x^{1/a})^\gamma.
	\end{align*}
	Since for every $i = 0, \dots , ab-1$ we have $(p^{1/b} - \zeta^i x^{1/a}) \in (p^{1/b}, x^{1/a})$, by \autoref{lemma-key}, we conclude that
	$$f^\gamma \in \bigg( p^{\beta / b} x^{\alpha / a} \ \bigg| \  {{\alpha, \beta \in \frac{1}{p^e} \Z} \atop {\alpha + \beta = \gamma}}   \bigg)^{ab} \sq \bigg( p^{\beta / b} x^{\alpha / a} \ \bigg| \  {{\alpha, \beta \in \frac{1}{p^e} \Z} \atop {\alpha + \beta = ab \gamma}}   \bigg).$$
	Therefore, whenever $\gamma \geq (1/a) + (1/b)$ we have $ab \gamma \geq a + b$, and hence every choice of $\alpha, \beta$ with $\alpha + \beta = ab \gamma$ must have $\alpha \geq a$ or $\beta \geq b$, and hence $f^\gamma \in (p, x)R^+$.
 \end{remark}

 \begin{remark}
    Suppose $F \in \bZ[x,y]$ is homogeneous of degree $d$, $f := F(p,y) \in \bZ_p\llbracket y\rrbracket = R$, and $f_0 := F(x,y) \in \bF_p\llbracket x,y\rrbracket$. Let $X \to \Spec \bZ_p\llbracket y\rrbracket$ denote the blowup of the origin with exceptional divisor $E$.  On the chart $\Spec R[y/p]$ corresponding to $p$, $f = p^d F(1, y/p)$.  On $E = V(p)$, the strict transform defined by $F(1,y/p)$  restricts to a divisor defined by $f_0(1,x/y)$.  Likewise on the chart corresponding to $x$, $f = x^d F(1,p/x)$.  Restricting the strict transform defined by $F(1, p/x)$ to $E = V(x)$, we get $F(1,y/x)$.  Putting this together, we see that the strict transform of $\Div(f)$ intersects the exceptional $\bP^1$ in $V(f_0)$ (viewed as a homogeneous equation). 

    The argument of \autoref{prop.fpt<=+pt<=lct} then implies that 
    \[ 
        \fpt(R_0, f_0) \leq \ppt(R, f) \leq \min\{t \;|\; 1 - td > -1 \} = {2 \over d}.
    \]
    In fact, the argument is easier as there is no additional contribution to the different to worry about (that is, \autoref{lem.StructureOfToricBlowup} is unnecessary).

    As a consequence, we see that the extremal bounds of \cite{KadyrsizovaKenkelPageSinghSmithVraciuWitt.LowerBoundsExtremal} also apply in mixed characteristic.  Analogous computations can also be done in higher dimension.
 \end{remark}

\section{Cusp-like singularities}

As usual, $p > 0$ is a fixed prime number, and we let $R \ceq \Z_p \llb x \rrb$ be a power series ring in one variable over $\Z_p$. Given integers $a, b \geq 2$ we let $f \ceq p^a + x^b \in R$, and we let $f_0 \ceq t^a + x^b \in \F_p \llb t, x \rrb$. Recall that we have
$$\fpt(R_0, f_0) \leq \ppt(R, f) \leq \frac{1}{a} + \frac{1}{b}.$$
In particular, we have $\ppt(f) = (1/a) + (1/b)$ whenever $\fpt(f_0) = (1/a) + (1/b)$. We also recall that, by \cite{HernandezFPureThresholdOfBinomial}, whenever $\fpt(f_0) \neq (1/a) + (1/b)$ then the denominator of $\fpt(f_0)$ is a power of $p$; that is, there is some integer $e \geq 1$ such that $p^e \fpt(f_0) \in \Z$. 

Our general goal in this section is to show that that, when $\fpt(f_0)$ is not too far from $(1/a) + (1/b)$, then we have an equality $\fpt(f_0) = \ppt(f)$. We begin by introducing some notation to set up the technical result. As usual, we assume a prime $p > 0$ is fixed.

Given a real number $\lambda > 0$ and an integer $e \geq 1$, we let $\rho_e(\lambda)$ be the largest rational number whose denominator is $p^e$ with the property that $\rho_e(\lambda) < \lambda$. This can be written out precisely as
$$\rho_e(\lambda) \ceq \frac{\lceil \lambda p^e \rceil - 1}{p^e}.$$

\begin{theorem} \label{thm-m-containment}
Let $p > 0$ be a prime number and consider the ring $R \ceq \Z_p \llb x \rrb$. Fix integers $a, b \geq 2$ and consider $f \ceq p^a + x^b \in R$. Fix an integer $e \geq 1$ and a number $\gamma \in (1/p^e) \Z \cap (0,1)$, and assume that for some integer $c \geq a - \lfloor a / p \rfloor$ one has
$$\gamma > \frac{1}{c} \left( \rho_e \left(\frac{c}{a}\right) + \rho_e \left( \frac{c}{b} \right) \right).$$
Then $f^\gamma \in (p, x) R^+$.
\end{theorem}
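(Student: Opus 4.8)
The plan is to factor $f = p^a + x^b$ over $R^+$ after taking $c$-th roots, and then apply \autoref{lemma-key} to push the resulting membership statement into $(p,x)R^+$. Concretely, let $\ell = \gcd(a,b)$ with $a = \ell A$, $b = \ell B$; after adjoining the relevant roots of unity to $R^+$ we have $f = \prod (p^{1/B} - \zeta^i x^{1/A})^{?}$-type factorizations, but the key move is subtler: we want to extract a $c$-th root. Write $f^{c/a} \cdot$ (something) — actually the cleanest route is to observe that each linear factor $p^{1/B} - \zeta^i x^{1/A}$ lies in the ideal $(p^{1/B}, x^{1/A})$, so by repeatedly applying part (ii) (or (iv)) of \autoref{lemma-key} together with the binomial-type manipulation already used in the remark following \autoref{prop.fpt<=+pt<=lct}, we get
$$f^{c\gamma/(ab)} \in \bigl( p^{\beta/b}\, x^{\alpha/a} \ \bigm|\ \alpha, \beta \in \tfrac{1}{p^e}\Z_{\geq 0},\ \alpha + \beta = c\gamma \bigr)$$
for $\gamma$ with denominator $p^e$. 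The hypothesis on $\gamma$ is designed exactly so that in every such monomial either $\alpha/a \geq 1$ or $\beta/b \geq 1$, forcing the monomial into $(p,x)R^+$.

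**The first step** is therefore to establish the precise membership $f^{\gamma} \in \bigl( p^{\beta/b} x^{\alpha/a} \mid \alpha,\beta \in \frac{1}{p^e}\Z_{\geq 0},\ \alpha + \beta = c\gamma/\text{(normalization)} \bigr)$ — I would track the exponents carefully, since the role of $c$ is that we are allowed to take a $c$-th root of $f$ (or of $f$ raised to an appropriate power) inside $R^+$, which buys us finer control over the available $\alpha,\beta$. The condition $c \geq a - \lfloor a/p \rfloor$ is the arithmetic constraint ensuring that $\rho_e(c/a)$ behaves well — specifically, I expect it guarantees that $c/a$ is not itself a fraction with denominator a power of $p$ that would make $\rho_e(c/a)$ jump, or more likely it ensures $\lceil (c/a) p^e \rceil \leq cp^e/a + \text{something controllable}$, so that the "no carries in base $p$" refinement in the Remark after \autoref{lemma-key} can be invoked.

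**The combinatorial heart** is then the following elementary claim: if $\gamma > \frac{1}{c}(\rho_e(c/a) + \rho_e(c/b))$ and $\alpha,\beta \in \frac{1}{p^e}\Z_{\geq 0}$ with $\alpha + \beta = c\gamma$, then $\alpha \geq c/a$ or $\beta \geq c/b$ — equivalently, one cannot have both $\alpha < c/a$ and $\beta < c/b$, since that would force $\alpha \leq \rho_e(c/a)$ and $\beta \leq \rho_e(c/b)$ (as $\alpha,\beta$ have denominator $p^e$), hence $c\gamma = \alpha+\beta \leq \rho_e(c/a)+\rho_e(c/b)$, contradicting the hypothesis. Once $\alpha \geq c/a$ we get $\alpha/a \geq c/a^2$... — no: I need $\alpha/a \geq 1$, so in fact the correct reading is that after the root extraction the exponent bookkeeping yields monomials $p^{\beta} x^{\alpha}$ with $\alpha + \beta = c\gamma$ and we need $\alpha \geq c/a \cdot a = c$... so I should instead phrase it as: the monomials are $p^{b\beta/c}\, x^{a\alpha/c}$ or similar, and $\alpha \geq c/a \Rightarrow x^{a\alpha/c} \in (x)$. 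I would settle the precise exponent normalization at the start and then this step is immediate.

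**The main obstacle** I anticipate is the exponent bookkeeping in the root-extraction step: getting \autoref{lemma-key}(iv) to apply requires the perturbation parameter $\epsilon$ (here a power of $p$ attached to $p$) to lie in $(0,1]$, and after taking $c$-th roots and raising to various powers one must verify that all the intermediate rational exponents stay in the allowed ranges and that $\gamma$ (respectively $c\gamma$) genuinely has denominator dividing $p^e$. The constraint $c \geq a - \lfloor a/p \rfloor$ is almost certainly exactly what makes $\rho_e(c/a)$ equal to $(cp^e/a$-rounded) in the favorable way and prevents an off-by-one loss; I would prove a short lemma isolating this arithmetic fact before assembling the pieces. Everything else — the factorization of $p^a + x^b$ into linear factors over $R^+$, the ideal-membership propagation, and the final "$\alpha/a \geq 1$ or $\beta/b \geq 1$" dichotomy — is routine given \autoref{lemma-key} and the Remark following it.
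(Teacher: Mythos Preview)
Your combinatorial core is exactly right, and the overall strategy---factor into linear pieces, apply \autoref{lemma-key}, then run the $\rho_e$-pigeonhole---matches the paper. But two concrete points are misidentified, and they are precisely what would let you finish the exponent bookkeeping you flag as the main obstacle.

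First, the factorization: the paper does \emph{not} use the $\gcd$-based decomposition $\prod(p^{1/B} - \zeta^i x^{1/A})$ from the remark after \autoref{prop.fpt<=+pt<=lct}. Instead, write $f = (p^{a/c})^c + (x^{b/c})^c$ and factor this sum of two $c$-th powers over $R^+$:
\[
f^\gamma \;=\; \prod_{i=0}^{c-1}\bigl(p^{a/c} + \zeta^{2i+1}\, x^{b/c}\bigr)^\gamma,
\]
where $\zeta$ is a primitive $(2c)$-th root of unity. This is how $c$ enters the argument---not by taking a $c$-th root of $f$ itself.

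Second, the role of the hypothesis $c \geq a - \lfloor a/p \rfloor$: this has nothing to do with $\rho_e(c/a)$ or base-$p$ carries. It is literally equivalent to $a/c \leq p/(p-1)$, which is exactly the condition $\epsilon \in (0,\, p/(p-1)]$ needed to invoke \autoref{lemma-key}(iii) with $\epsilon = a/c$ and $y = x^{b/c}$. Applying that part of the lemma to each linear factor gives
\[
\bigl(p^{a/c} + \zeta^{2i+1} x^{b/c}\bigr)^\gamma \;\in\; \Bigl(\, p^{(a/c)\alpha}\, x^{(b/c)\beta} \;\Bigm|\; \alpha,\beta \in \tfrac{1}{p^e}\Z \cap [0,1),\ \alpha+\beta = \gamma \,\Bigr).
\]
Taking the $c$-fold product, $f^\gamma$ lies in the ideal generated by monomials $p^{(a/c)A}\, x^{(b/c)B}$ with $A = \sum_i \alpha_i$, $B = \sum_i \beta_i$, $A+B = c\gamma$, and $A,B \in \frac{1}{p^e}\Z_{\geq 0}$. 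For such a monomial to miss $(p,x)R^+$ one needs $(a/c)A < 1$ and $(b/c)B < 1$, i.e.\ $A < c/a$ and $B < c/b$---and now your pigeonhole argument applies verbatim. So the exponent normalization you were hunting for is simply $(a/c)$ on the $p$-side and $(b/c)$ on the $x$-side; no auxiliary arithmetic lemma about $\rho_e$ is required.
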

\begin{proof}
Let $\zeta$ be a primitive $(2c)$-th root of unity, and note that
$$f^\gamma = \big( (p^{a/c})^c + (x^{b/c})^c)^{\gamma} = \prod_{i = 0}^{c-1} (p^{a/c} + \zeta^{2i+1} x^{b/c})^\gamma.$$
	For every $i = 0, \dots, c$, we have $p^{a/c} + \zeta^{2i + 1} x^{b/c} \in (p^{a/c}, x^{b/c})$ and, by our assumption on $c$, we know that $a / c \leq p / (p-1)$. We can therefore apply \autoref{lemma-key} to conclude that 
	$$ \big( p^{a/c} + \zeta^{2i + 1} x^{b/c} \big)^\gamma \in \bigg( p^{(a/c) \alpha} \ x^{(b/c) \beta} \ \bigg| \ {{\alpha, \beta \in \frac{1}{p^e} \Z \cap [0,1)} \atop {\alpha + \beta = \gamma  }} \bigg),$$ 
	and therefore $f^\gamma$ is contained in the $c$-fold product of these ideals; that is, $f^\gamma$ belongs to the ideal of $R^+$ generated by monomials of the form
$$p^{(a/c)(\alpha_1 + \cdots + \alpha_c)} \ x^{(b/c) (\beta_1 + \cdots + \beta_c)},$$
where $\alpha_1, \dots , \alpha_c, \beta_1, \dots , \beta_c \in (1/p^e) \Z \cap [0,1)$ satisfy $\alpha_i + \beta_i = \gamma$ for all $i = 1, \dots  c$. Now, if $f^\gamma \notin (p, x)R^+$, then there is a choice of such $\alpha_i$ and $\beta_i$ for which $\sum \alpha_i < c / a$ and $\sum \beta_i < c / b$. Since $\sum \alpha_i$ and $\sum \beta_i$ both have denominator $p^e$, this gives
$$c \gamma = \sum \alpha_i + \sum \beta_i \leq \rho_e \left( \frac{c}{a} \right) + \rho_e \left( \frac{c}{b} \right),$$
contradicting the assumption on $\gamma$.
\end{proof}

\begin{corollary} \label{cor-ppt-test}
Consider the ring $R_0 \ceq \F_p \llb y, x \rrb$ and the element $f_0 \ceq y^a + x^b \in R_0$. Assume $\fpt(R_0, f_0) \neq (1/a) + (1/b)$ and fix an integer $e \geq 1$ such that $p^e \fpt(R_0, f_0) \in \Z$. If there is some integer $c \geq a - \lfloor a / p \rfloor$ for which the inequality
$$\fpt(R_0, f_0) > \frac{1}{c} \left( \rho_e \left(\frac{c}{a}\right) + \rho_e \left( \frac{c}{b} \right) \right)$$
holds, then $\fpt(R_0, f_0) = \ppt(f)$. In particular, setting $c = a$, we get $\fpt(R_0, f_0) = \ppt(R, f)$ whenever
$$\fpt(R_0, f_0) \geq \frac{1}{a} + \frac{1}{b} - \frac{1}{a p^e}.$$
\end{corollary}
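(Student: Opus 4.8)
The plan is to establish the reverse of the inequality in \autoref{prop.fpt<=+pt<=lct} by applying \autoref{thm-m-containment} at the exponent $\gamma$ equal to $\fpt(R_0,f_0)$ itself. Write $\lambda \ceq \fpt(R_0, f_0)$. By \autoref{prop.fpt<=+pt<=lct} we already know $\lambda \le \ppt(R,f) \le 1/a + 1/b$, so everything reduces to showing $\ppt(R,f) \le \lambda$.

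First I would check that $\lambda$ is an admissible exponent for \autoref{thm-m-containment}. The hypothesis $p^e \lambda \in \Z$ says exactly that $\lambda \in \frac{1}{p^e}\Z$, and $\lambda \in (0,1)$: indeed $\fpt(R_0, f_0) > 0$ since $f_0$ is a nonzero non-unit in the regular ring $R_0$, while $\lambda \le 1/a + 1/b \le 1$ with equality in the last spot only when $a = b = 2$, a case excluded by the assumption $\lambda \ne 1/a + 1/b$. With this, the Corollary's hypothesis---that $\lambda > \frac1c\big(\rho_e(c/a) + \rho_e(c/b)\big)$ for some integer $c \ge a - \lfloor a/p\rfloor$---is verbatim the hypothesis of \autoref{thm-m-containment} applied to $\gamma = \lambda$. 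That theorem therefore yields $f^{\lambda} \in (p,x)R^+$.

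Next I would translate this into the threshold statement. Since $R = \Z_p\llbracket x \rrbracket$ is regular, the equivalence of conditions \ref{lem.EquivPurityForCompletionIdealContainment.PurityNoCompletion} and \ref{lem.EquivPurityForCompletionIdealContainment.IdealContainmentNoCompletion} in \autoref{lem.EquivPurityForCompletionIdealContainment} (with $\fm = (p,x)$) shows that $R \xrightarrow{1 \mapsto f^\lambda} R^+$ is not pure. Now the set of rational $\alpha \ge 0$ for which $R \xrightarrow{1 \mapsto f^\alpha} R^+$ is pure is downward closed---if $f^\alpha \in (p,x)R^+$ then $f^{\alpha'} = f^\alpha \cdot f^{\alpha' - \alpha} \in (p,x)R^+$ for every $\alpha' \ge \alpha$---so it contains no rational $\ge \lambda$; on the other hand, since $\ppt(R,f) \ge \lambda$ it contains every rational $\alpha < \lambda$. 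Hence this set is $[0,\lambda) \cap \Q$, and taking the supremum gives $\ppt(R,f) = \lambda$.

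Finally, for the ``in particular'' clause I would take $c = a$, which is allowed since $a \ge a - \lfloor a/p\rfloor$, and verify the elementary estimate
\[
\frac1a\Big(\rho_e(1) + \rho_e\big(a/b\big)\Big) \;=\; \frac{p^e + \lceil a p^e / b\rceil - 2}{a p^e} \;<\; \frac1a + \frac1b - \frac1{a p^e},
\]
which is immediate from $\lceil a p^e / b\rceil < a p^e / b + 1$. Thus the assumption $\lambda \ge 1/a + 1/b - 1/(a p^e)$ forces the strict inequality $\lambda > \frac1a\big(\rho_e(1) + \rho_e(a/b)\big)$, i.e.\ the hypothesis of the general case with $c = a$, and we conclude as above. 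The only part that takes a moment's care is the bookkeeping in the second paragraph---making sure $\lambda$ lies strictly in $(0,1)$ with denominator dividing $p^e$, so that \autoref{thm-m-containment} applies at $\gamma = \lambda$ without any perturbation---but this presents no real obstacle; the rest is purely formal.
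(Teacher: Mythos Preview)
Your proposal is correct and follows exactly the same approach as the paper: set $\gamma = \fpt(R_0,f_0)$ in \autoref{thm-m-containment} to obtain $\ppt(R,f) \le \fpt(R_0,f_0)$, and combine with \autoref{prop.fpt<=+pt<=lct} for the reverse inequality. You supply more detail than the paper does---verifying $\lambda \in (0,1) \cap \tfrac{1}{p^e}\Z$, invoking \autoref{lem.EquivPurityForCompletionIdealContainment} explicitly, and spelling out the $c=a$ computation for the ``in particular'' clause---but none of this deviates from the intended argument.
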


\begin{proof}
	Setting $\gamma = \fpt(R_0, f_0)$ in \autoref{thm-m-containment}, we conclude that $\fpt(R_0, f_0) \geq \ppt(R, f)$. The reverse inequality is given by \autoref{prop.fpt<=+pt<=lct}.
\end{proof}

It is shown in \cite{HernandezFPureThresholdOfBinomial} that, given integers $a, b \geq 2$, the $F$-pure threshold of $f_0 \ceq t^a + x^b \in \F_p \llb t, x \rrb$ can be computed from the base-$p$ expansions of $1/a$ and $1/b$. We discuss this algorithm in the case where $p > ab$.

For each integer $i \geq 0$, let $x_i \in \{1, \dots , a-1\}$ be the standard representative of $p^i$ in $(\Z / a)^\times$, so that the expansion of $1/a$ in base $p$ is
\begin{align*}
	\frac{1}{a} & = \frac{1}{p} \left( \frac{ p x_0 - x_1}{a} \right) + \frac{1}{p^2} \left( \frac{p x_1 - x_2}{a} \right) + \frac{1}{p^3} \left( \frac{p x_2 - x_3}{a} \right) + \cdots \\
	\intertext{Similarly, letting $y_i \in \{1, \dots , b-1 \}$ be the standard representative of $p^i$ in $(\Z / b)^\times$, the expansion of $1/b$ in base $p$ becomes} 
	\frac{1}{b} & = \frac{1}{p} \left( \frac{p y_0 - y_1}{b} \right) + \frac{1}{p^2} \left( \frac{p y_1 - y_2}{b} \right) + \frac{1}{p^3} \left( \frac{p y_2 - y_3}{b} \right) + \cdots.
\end{align*}

Now let $m$ be the smallest integer $i \geq 0$ for which $(x_i / a) + (y_i / b) > 1$, with the understanding that $m = \infty$ if this never occurs. Note that, because $x_0 = y_0 = 1$, we always have $m \geq 1$.

This $m$ can also be thought of as follows: the sum $(1/a) + (1/b)$ first carries when performed in base $p$ at the $(m+1)$-th digit. The $F$-pure threshold of $f_0$ is then obtained as a suitable truncation of the sum $(1/a) + (1/b)$ in base $p$; more precisely, we have
$$\fpt(f_0) = \frac{1}{p^m} +  \sum_{i = 1}^m \frac{1}{p^i} \left( \frac{x_{i-1} p - x_i}{a} + \frac{y_{i-1} p - y_i}{b} \right) = \frac{1}{a} + \frac{1}{b} - \frac{1}{p^m} \left( \frac{x_m}{a} + \frac{y_m}{b} - 1 \right),$$
with the understanding that this is $(1/a) + (1/b)$ if $m = \infty$. 

Crucially for us, given integers $a, b \geq 2$, there exist rational numbers $C_i \geq 0$ and integers $m_i \geq 1$, indexed by $i \in (\Z / ab)^\times$, such that for all $p > ab$ we have
$$\fpt(f_0) = \frac{1}{a} + \frac{1}{b} - \frac{C_i}{p^{m_i}} \text{ when } p \equiv i \text{ mod } ab.$$
By virtue of \autoref{cor-ppt-test}, whenever we have $C_i \leq 1/a$ we can conclude that $\ppt(f) = \fpt(f_0)$ for all primes $p > ab$ for which $p \equiv i \text{ mod } ab$. Since the computation of $\fpt(f_0)$ is implemented in the \texttt{FrobeniusThresholds} package for \texttt{Macaulay2} \cite{HernandezSchwedeTeixeiraWittFrobeniusThresholds}, we have an effective way to find primes for which we get $\ppt(f) = \fpt(f_0)$; the result of such a search for $2 \leq a, b \leq 5$ are displayed in Table \ref{table.intro} in the Introduction. Note that the full strength of \autoref{cor-ppt-test} is used for testing some small primes, and that the omission of a given prime from the table does not entail that $\ppt(f)$ differs from $\fpt(f_0)$---only that we do not know whether it is true.

\begin{remark} \label{remark-table-small-cases}
	In \autoref{sec.sporadic} we show that $\ppt(R, f) = \fpt(R_0, f_0)$ for some other cusp-like singularities for which the hypotheses of \autoref{cor-ppt-test} fail: we show that this is the case for $R \ceq \Z_2 \llb x \rrb$ and $f = 2^a + x^2 \in R$ for any $a \geq 2$.
\end{remark}

\section{Sporadic examples} \label{sec.sporadic}

In this section we explore some sporadic examples, where we make crucial use of the following lemma, which utilizes the fact that the $p$-adic completion of $R^+$ is Cohen Macaulay for a power series ring $R$ over the $p$-adic integers \cite{BhattAbsoluteIntegralClosure}.

\begin{lemma} \label{lemma-reg-seqce}
	Let $R \ceq \Z_p \llb x_1, \dots, x_n \rrb$ be a power series ring over the $p$-adic integers. Let $\alpha_0, \alpha_1, \dots , \alpha_n \in \Q \cap [0, 1)$, and let $h \in R^+$. Then
	$$p^{\alpha_0} x_1^{\alpha_1} \cdots x_n^{\alpha_n} h \in (p, x_1, \dots , x_n)R^+ \implies h \in (p^{1 - \alpha_0}, x_1^{1 - \alpha_1}, \dots , x_n^{1 - \alpha_n}) R^+.$$
\end{lemma}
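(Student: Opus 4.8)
The plan is to turn the statement into a colon-ideal computation on the $p$-adic completion $\widehat{R^+}$, where we do have a regular sequence available, and then to descend the resulting membership back to $R^+$ by exploiting that the target ideal is large.

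First I would clear denominators by adjoining roots. Let $N$ be a common denominator of $\alpha_0,\dots,\alpha_n$, write $\alpha_i = c_i/N$ with $0\le c_i < N$, and set $\pi \ceq p^{1/N}$, $u_i \ceq x_i^{1/N}$, $S \ceq \Z_p[\pi]\llbracket u_1,\dots,u_n\rrbracket$. Since $\Z_p[\pi]=\Z_p[T]/(T^N-p)$ is a complete DVR by Eisenstein, $S$ is a regular complete local ring with regular system of parameters $\pi,u_1,\dots,u_n$, and $S^+=R^+$; as $\pi^N=p$, the $\pi$-adic and $p$-adic topologies on $R^+$ coincide, so $\widehat{S^+}=\widehat{R^+}$. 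Up to units the hypothesis reads $\pi^{c_0}u_1^{c_1}\cdots u_n^{c_n}\,h \in (\pi^N,u_1^N,\dots,u_n^N)R^+$ and the desired conclusion reads $h\in\mathfrak{b}R^+$ where $\mathfrak{b}\ceq(\pi^{N-c_0},u_1^{N-c_1},\dots,u_n^{N-c_n})S$; crucially $N-c_i\ge 1$ and $\mathfrak{b}\supseteq(\pi^N,u_1^N,\dots,u_n^N)S=(p,x_1,\dots,x_n)S=\mathfrak{m}_R S$.

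Next, pass to the completion and compute the colon. Since $(\pi^N,\dots,u_n^N)R^+\subseteq(\pi^N,\dots,u_n^N)\widehat{R^+}$, the hypothesis gives $\pi^{c_0}u_1^{c_1}\cdots u_n^{c_n}\,h \in (\pi^N,u_1^N,\dots,u_n^N)\widehat{R^+}$. By \cite{BhattAbsoluteIntegralClosure}, $\widehat{R^+}=\widehat{S^+}$ is a (balanced) big Cohen–Macaulay $S$-algebra, faithfully flat over $S$ as $S$ is regular; hence $\pi,u_1,\dots,u_n$ — a system of parameters of $S$ — is a regular sequence on $\widehat{R^+}$, and so is any rearrangement of $\pi^{e_0},u_1^{e_1},\dots,u_n^{e_n}$ with $e_i\ge 1$ (each is, up to permutation, a system of parameters of $S$). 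Peeling the monomial off one variable at a time using the elementary identity $(t_0^{d_0},\dots,t_n^{d_n}):t_j^{c_j}=(t_0^{d_0},\dots,t_j^{d_j-c_j},\dots,t_n^{d_n})$, valid for a regular sequence and $c_j\le d_j$, we obtain
\[
(\pi^N,u_1^N,\dots,u_n^N)\widehat{R^+} : \pi^{c_0}u_1^{c_1}\cdots u_n^{c_n} = (\pi^{N-c_0},u_1^{N-c_1},\dots,u_n^{N-c_n})\widehat{R^+} = \mathfrak{b}\widehat{R^+},
\]
so $h\in\mathfrak{b}\widehat{R^+}$.

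Finally I would descend from $\widehat{R^+}$ to $R^+$. The key point is $\mathfrak{b}\supseteq\mathfrak{m}_R S$, which makes the quotient by $\mathfrak{b}$ blind to $p$-adic completion: $\widehat{R^+}/\mathfrak{b}\widehat{R^+}=(S/\mathfrak{b})\otimes_S\widehat{R^+}$, and since $S/\mathfrak{b}$ is an $S/\mathfrak{m}_R S$-module this equals $(S/\mathfrak{b})\otimes_{S/\mathfrak{m}_R S}\bigl(\widehat{R^+}/\mathfrak{m}_R\widehat{R^+}\bigr)$; but $\widehat{R^+}/p\widehat{R^+}=R^+/pR^+$ because $\widehat{R^+}$ is the $p$-adic completion of the $p$-torsion-free domain $R^+$, whence $\widehat{R^+}/\mathfrak{m}_R\widehat{R^+}=R^+/\mathfrak{m}_R R^+$, and running the same chain of identities backwards identifies $\widehat{R^+}/\mathfrak{b}\widehat{R^+}$ with $(S/\mathfrak{b})\otimes_S R^+=R^+/\mathfrak{b}R^+$, compatibly with the maps out of $R^+$. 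Therefore $h\in\mathfrak{b}\widehat{R^+}$ together with $h\in R^+$ forces $h\in\mathfrak{b}R^+=(p^{1-\alpha_0},x_1^{1-\alpha_1},\dots,x_n^{1-\alpha_n})R^+$.

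I expect the descent of the last paragraph to be the only genuinely delicate step: $\widehat{R^+}$ is not flat over $R^+$ nor over the non-regular module-finite subextensions of $R^+$ that contain the fractional powers, so one cannot invoke faithfully flat descent naively; the argument must instead use the inclusion $\mathfrak{m}_R\subseteq\mathfrak{b}$, which is exactly what is needed to force the two quotients to agree. Everything else — the reduction by $N$-th roots, Bhatt's Cohen–Macaulayness, and the colon computation for a regular sequence — is routine.
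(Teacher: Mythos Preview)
Your proposal is correct and follows essentially the same approach as the paper: adjoin roots to produce a regular ring $S$ between $R$ and $R^+$, pass to the $p$-adic completion $\widehat{R^+}$, invoke Bhatt's Cohen--Macaulayness to make the relevant fractional-power sequence regular, compute the colon, and descend via the observation that the target ideal contains $p$. The paper compresses your final paragraph into the single sentence ``Since the ideal on the right contains $p$, the result follows,'' and uses individual denominators $\gamma_i$ rather than a common $N$, but these are cosmetic differences.
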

\begin{proof}
    Fix $\gamma_i \in \bZ$ the denominator of $\alpha_i$.  Consider $S = R[p^{1/\gamma_0}, x_1^{1/\gamma_1}, \dots, x_n^{1/\gamma_n}]$ also a regular ring.
    Suppose $p^{\alpha_0} x_1^{\alpha_1} \cdots x_n^{\alpha_n} h \in (p, x_1, \dots , x_n)R^+$.  Then certainly $p^{\alpha_0} x_1^{\alpha_1} \cdots x_n^{\alpha_n} h \in (p, x_1, \dots , x_n)\widehat{R^+}$ where $\widehat{R^+}$ is the $p$-adic completion.  As $\widehat{R^+}$ is big Cohen-Macaulay over $S$  by \cite{BhattAbsoluteIntegralClosure}, we see that $p^{1/\gamma_0}, x_1^{1/\gamma_1}, \dots, x_n^{1/\gamma_n}$ is a regular sequence on $\widehat{R^+}$ and so
    \[
        h \in (p^{1 - \alpha_0}, x_1^{1 - \alpha_1}, \dots , x_n^{1 - \alpha_n}) \widehat{R^+}.
    \]
    Since the ideal on the right contains $p$, the result follows.
\end{proof}

\subsection{{The cusp $\mathbf{x^2 + y^3}$ in mixed characteristic 2}} We consider an honest binomial where $p$ does not replace one of the variables.

\begin{lemma}
\label{thm.CuspNoPChar2}
Let $R \ceq \Z_2 \llb x, y \rrb$ be a power series ring in two variables over the $2$-adic integers. Further suppose that $B$ is a big Cohen-Macaulay $R^+$ algebra.  For $f \ceq x^2 + y^3 \in R$, we have $ 2^{1/4} \, f^{1/2} \notin (2,x,y) B$. 
\end{lemma}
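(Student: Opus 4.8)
The goal is to show $2^{1/4} f^{1/2} \notin (2,x,y)B$ for $f = x^2 + y^3$, where $B$ is any big Cohen–Macaulay $R^+$-algebra. The natural strategy is to argue by contradiction: suppose $2^{1/4} f^{1/2} \in (2,x,y)B$, and derive a containment that violates the big Cohen–Macaulay property of $B$ over some finite regular extension of $R$. The first step is to choose a convenient finite extension $S$ of $R$ inside $R^+$ over which $f$ becomes a product of linear-ish factors. Since $f = x^2 + y^3 = (x - \iota y^{3/2})(x + \iota y^{3/2})$ after adjoining a square root of $y$ and a square root of $-1$ (here $\iota^2 = -1$), we can work in $S \ceq R[y^{1/2}, \iota]$, and then $f^{1/2} = (x - \iota y^{3/2})^{1/2}(x + \iota y^{3/2})^{1/2}$ up to a unit. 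Each factor $x \pm \iota y^{3/2}$ lies in the ideal $(x, y^{3/2})$, hence also in $(x, y)$ after extracting roots; but we need a more refined containment that tracks the exponent $1/2$ carefully, so the relevant tool is \autoref{lemma-key}(iv) (or a variant) applied in the absolutely integrally closed domain $S^+ = R^+$.

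\textbf{Key steps.} Concretely, I would proceed as follows. (1) Reduce to showing the cleaner statement $2^{1/4} f^{1/2} \notin (2, x, y) \widehat{R^+}$ or, better, a statement purely about a regular ring: using \autoref{lemma-reg-seqce} in reverse spirit, or by a direct regular-sequence argument on a big Cohen–Macaulay algebra, translate membership in $(2,x,y)B$ into a statement that can be contradicted. The point of allowing arbitrary big Cohen–Macaulay $B$ (rather than just $\widehat{R^+}$) is that the argument will only use that $2^{1/\gamma}, x^{1/\gamma'}, y^{1/\gamma''}$ form a regular sequence on $B$ after passing to the appropriate finite regular extension $S' = R[2^{1/\gamma}, x^{1/\gamma'}, y^{1/\gamma''}]$, which holds since $B$ is big Cohen–Macaulay over $S'$ and $S'$ is regular. (2) Expand $f^{1/2}$ as a product over the factors $x \pm \iota y^{3/2}$, apply \autoref{lemma-key} to each factor to land $f^{1/2}$ in an explicit monomial ideal of $R^+$ generated by terms $x^{\alpha} y^{3\beta/2}$ with $\alpha + \beta = 1/2$ (and $\alpha,\beta \geq 0$), taking the product of two such ideals so that the total is generated by $x^{\alpha_1 + \alpha_2} y^{3(\beta_1+\beta_2)/2}$ with $\alpha_1 + \beta_1 = \alpha_2 + \beta_2 = 1/2$. (3) Multiply by $2^{1/4}$ and compare with the ideal $(2,x,y)$: the claim becomes that no monomial $2^{1/4} x^{\alpha} y^{3\beta/2}$ with $\alpha + \beta = 1$, $\alpha, \beta \geq 0$, lies in $(2,x,y)\widehat{R^+}$ — equivalently, by the regular-sequence property (\autoref{lemma-reg-seqce}), that one cannot have $1/4 \geq 1$ or $\alpha \geq 1$ or $3\beta/2 \geq 1$ simultaneously failing. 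The arithmetic trap is that $\alpha + \beta = 1$ forces either $\alpha \geq 1$ (so $x^\alpha \in (x)$, fine, membership holds) — so in fact this naive bound is not enough, and I must instead keep the exponent on $y$ as $3\beta/2$ and exploit that $1/4 + \alpha + 3\beta/2 < $ something, or rather note that when $\alpha = 1$ we get $2^{1/4} x \cdot y^0$, which \emph{is} in $(2,x,y)$. This means the cusp exponent $3$ (versus $2$) matters essentially: I expect we actually need $f^{1/2} \in (x, y^{3/2})$-type ideals with the sharper count that $\alpha$ can equal $1$ only by "using up" all of the $1/2 + 1/2$, forcing $\beta = 0$ in \emph{both} factors, i.e. $f^{1/2} = x$ exactly up to lower-order terms — and then the $2^{1/4}$ prevents membership because $1/4 < 1$ and there is no $y$ at all.

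\textbf{Main obstacle.} The delicate point is precisely this interplay at the boundary: showing that the only monomial in the expansion that could cause trouble is the one with $\beta_1 = \beta_2 = 0$ (contributing $x^1$), and that once the $2^{1/4}$ factor is attached this still fails to lie in $(2,x,y)B$ because $2^{1/4} x \in (2,x,y)B$ would force $2^{1/4} \in (2, x, y) \cdot (\text{something})$ after dividing by $x$ via the regular sequence, and $1/4 < 1$ gives $2^{1/4} \notin (2)\widehat{R^+} $ together with $2^{1/4} \notin (x,y)\widehat{R^+}$ by a weight/valuation argument. So the real work is (a) carrying out the \autoref{lemma-key} bookkeeping precisely enough to enumerate all generating monomials of the ideal containing $f^{1/2}$, and (b) ruling out the surviving candidate(s) $2^{1/4} x^{\alpha} y^{3\beta/2}$ via the big Cohen–Macaulay regular-sequence property of $B$, which is where \autoref{lemma-reg-seqce} (suitably adapted to a general big Cohen–Macaulay $B$, as noted in the acknowledgments) does the heavy lifting. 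I would expect the proof to be short once the monomial ideal containing $f^{1/2}$ is pinned down; the subtlety is entirely in checking no carrying/cancellation lets a "bad" monomial with small $x$-exponent \emph{and} small $y$-exponent slip through, which would require $\alpha_1 + \alpha_2 < 1$ and $3(\beta_1 + \beta_2)/2 < 1$ simultaneously with $\alpha_1 + \beta_1 = \alpha_2 + \beta_2 = 1/2$, i.e. $\alpha_1 + \alpha_2 < 1$ and $\beta_1 + \beta_2 < 2/3$, contradicting $(\alpha_1+\alpha_2) + (\beta_1+\beta_2) = 1$.
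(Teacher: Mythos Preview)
There is a genuine gap: the logical direction of your argument is backwards. Applying \autoref{lemma-key} to the factors $(x \pm \iota y^{3/2})^{1/2}$ gives a containment $f^{1/2} \in I$ for an explicit monomial ideal $I$ --- an \emph{upper bound} on $f^{1/2}$. But the contradiction hypothesis $2^{1/4}f^{1/2} \in (2,x,y)B$ is also an upper bound, and two upper bounds cannot conflict. Concretely, with $p=2$ and $\gamma = 1/2$ your ideal $I$ is generated by $x$, $x^{1/2}y^{3/4}$, and $y^{3/2}$. If every generator $m$ satisfied $2^{1/4}m \in (2,x,y)B$ you would have proved the \emph{negation} of the lemma; if some generator (here $m = x^{1/2}y^{3/4}$) satisfies $2^{1/4}m \notin (2,x,y)B$, that says nothing about $f^{1/2}$, which is merely an unspecified combination of the generators. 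Your closing arithmetic is also wrong: $\alpha_1+\alpha_2 < 1$ and $\beta_1+\beta_2 < 2/3$ is perfectly consistent with $(\alpha_1+\alpha_2) + (\beta_1+\beta_2) = 1$ (take both sums equal to $1/2$), so there is no contradiction there.

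The paper's proof instead produces lower-bound information by approximating $f^{1/2}$ explicitly. Set $g \ceq x + y^{3/2}$; since $g^2 = f + 2xy^{3/2}$ one has $(f^{1/2}-g)^2 \in (2)R^+$, and \autoref{lemma-key} gives $f^{1/2} - g \in (2^{1/2})B$. Combining with the contradiction hypothesis (which, via the big Cohen--Macaulay regular-sequence property as in \autoref{lemma-reg-seqce}, yields $f^{1/2} \in (2^{3/4},x,y)B$) and intersecting, one obtains $f^{1/2} - g \in (2^{3/4}, 2^{1/2}x, 2^{1/2}y)B$. Now square the expression $f^{1/2} = x + y^{3/2} + 2^{1/2}a x + 2^{1/2}b y + 2^{3/4}c$ and reduce modulo $(2^{3/2}, x^2, y^2)$: the cross term $2xy^{3/2}$ from $g^2$ survives on one side but not the other, forcing $2xy^{3/2} \in (2^{3/2}, x^2, y^2)B$ and hence $1 \in (2^{1/2}, x, y^{1/2})B$ --- the contradiction. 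The key idea you are missing is this explicit approximant $g$: it converts the problem into an equation in $B$ that, once squared, exposes a specific term that cannot be absorbed.
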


We originally wrote down the proof for $B = R^+$ utilizing \autoref{lemma-reg-seqce}.  However, Linquan Ma pointed out that our argument applies to an arbitrary big Cohen-Macaulay $B$ --- a stronger statement yielding \autoref{cor.CuspNoPChar2} below.

\begin{proof}
	Suppose $ 2^{1/4} \, f^{1/2} \in (2,x,y) B$ for a contradiction. Then as $B$ is big Cohen-Macaulay, we have $f^{1/2} \in (2^{3/4}, x,y)B$. Fix a $y^{1/2}$ and $2^{1/4}$ in $R^+$ and set $g \ceq x + y^{3/2}$. Then we have $f \equiv g^2 \bmod{(2) B} $. This implies that $(f^{1/2}  - g)^2 \in (2)B$, and \autoref{lemma-key} gives that $f^{1/2} - g \in (2^{1/2})B$. On the other hand, we have $f^{1/2} - g \in (2^{3/4}, x, y)$, and by using that $B$ is big Cohen-Macaulay again we conclude that 
    $$f^{1/2} - g \in (2^{1/2}) B \cap (2^{3/4}, x, y)B = (2^{3/4}, 2^{1/2} x, 2^{1/2} y) B.$$
    Now write $f^{1/2} = x + y^{3/2} + ax 2^{1/2} + by 2^{1/2} + c2^{3/4}$. Now, looking at the equation 
    $$(x + y^{3/2} + ax 2^{1/2} + by 2^{1/2} + c2^{3/4})^2 = x^2 + y^3$$
    modulo $2^{3/2}, x^2, y^2$, we conclude that $2 x y^{3/2} \in (2^{3/2}, x^2, y^2) B$, and using that $B$ is big Cohen-Macaulay yet again gives that $1 \in (2^{1/2}, x, y^{1/2}) B$, a contradiction.
\end{proof}

\begin{corollary}
\label{cor.CuspNoPChar2}
    With notation as in \autoref{thm.CuspNoPChar2}, we have that $\tau_+(R, f^{1/2 + \epsilon}) = R$ for all $1 \gg \epsilon > 0$.  In particular, $\ppt(R, f) > 1/2 = \fpt(\bF_p\llbracket x,y \rrbracket, f)$.
\end{corollary}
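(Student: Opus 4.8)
The plan is to reduce the statement to a single non-membership in $\widehat{R^+}$ and then bootstrap from \autoref{thm.CuspNoPChar2} by imitating the computation in its proof. By \autoref{lem.EquivPurityForCompletionIdealContainment} applied to the regular ring $S=R$, we have $\tau_+(R,f^\alpha)=R$ iff $f^\alpha\notin(2,x,y)R^+$, equivalently $f^\alpha\notin(2,x,y)\widehat{R^+}$; since $\tau_+$ is monotone in the exponent (a larger exponent gives a smaller test ideal), it suffices to produce a single $\delta>0$ with $f^{1/2+\delta}\notin(2,x,y)\widehat{R^+}$, for then $\tau_+(R,f^{1/2+\epsilon})=R$ for all $\epsilon\in(0,\delta]$, whence $\ppt(R,f)\geq 1/2+\delta>1/2=\fpt(\bF_2\llbracket x,y\rrbracket,f)$. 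Throughout write $B:=\widehat{R^+}$: by \cite{BhattAbsoluteIntegralClosure} it is big Cohen--Macaulay over $R$ and over every module-finite extension of $R$, so $2,x,y$ and all their roots form regular sequences on $B$, which is what licenses the regular-sequence manipulations below.

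Fix $\delta=1/2^k$ with $k$ large and assume for contradiction that $f^{1/2+\delta}\in(2,x,y)B$. I would extract two inputs. First, \autoref{thm.CuspNoPChar2} (applied with $B=\widehat{R^+}$), together with the regular-sequence trick in its proof --- unravelling $2^{1/4}f^{1/2}\notin(2,x,y)B$ along the regular sequence $2^{1/4},x,y$ --- gives the ``room'' estimate $f^{1/2}\notin(2^{3/4},x,y)B$. Second, writing $f=g^2-2xy^{3/2}$ with $g:=x+y^{3/2}$ (after fixing $y^{1/2}\in B$) we have $f\in(2,g^2)B$, so \autoref{lemma-key}(i) yields $f^{1/2}\in(2^{1/2},g)B$ and $f^\delta=f^{1/2^k}\in(2^{1/2^k},g^{1/2^{k-1}})B$; combining the latter with $g\in(2,x,y^{3/2})B$ and \autoref{lemma-key}(ii) sharpens this to $f^\delta\in(2^{1/2^k},x^{1/2^{k-1}},y^{3/2^k})B$, so that $f^\delta$ perturbs any exponent only by $O(1/2^k)$.

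The heart is then the refinement-and-substitution argument of \autoref{thm.CuspNoPChar2}'s proof, run now with the extra factor $f^\delta$. Using $f^{1/2}\in(2^{1/2},g)B$ write $f^{1/2}=2^{1/2}h_1+gh_2$; since $g\in(x,y)B$, the assumed membership forces $2^{1/2}h_1f^\delta=f^{1/2+\delta}-gh_2f^\delta\in(2,x,y)B$, whence $h_1f^\delta\in(2^{1/2},x,y)B$ by \autoref{lemma-reg-seqce}, while the room estimate forces $h_1\notin(2^{1/4},x,y)B$. Iterating --- refining $h_1$ modulo higher powers of $2$ and of $(x,y)$, squaring the resulting explicit expression for $f^{1/2}$, comparing with $f=g^2-2xy^{3/2}$, and peeling off prefactors via \autoref{lemma-key} and \autoref{lemma-reg-seqce}, exactly the steps that in \autoref{thm.CuspNoPChar2} culminate in the absurd $1\in(2^{1/2},x,y^{1/2})B$ --- one is forced into a membership of a single monomial $2^{a}x^{b}y^{c}$ in an ideal $(2^{a'},x^{b'},y^{c'})B$ where $(a,b,c)$ and $(a',b',c')$ differ from their $\delta=0$ values (the ones appearing in \autoref{thm.CuspNoPChar2}) by $O(1/2^k)$. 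At $\delta=0$ this monomial lies outside the ideal, and, crucially, via strict exponent inequalities; hence it remains outside for all sufficiently large $k$, giving the contradiction.

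The main obstacle is this last step: unlike \autoref{thm.CuspNoPChar2}, where one only tracks the element $2^{1/4}f^{1/2}$, here the extra factor $f^\delta$ mixes the ``$2$-adic'' and ``$(x,y)$-adic'' directions, and one must carry out the bookkeeping carefully enough to confirm both that all perturbations introduced by $f^\delta$ are $O(1/2^k)$ in the relevant exponents and that this is strictly smaller than the $2^{3/4}$-versus-$2$ slack coming from \autoref{thm.CuspNoPChar2}. Everything else reduces to the first paragraph together with routine applications of \autoref{lemma-key}, \autoref{lemma-reg-seqce}, and the big Cohen--Macaulayness of $\widehat{R^+}$.
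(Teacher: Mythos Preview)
Your reduction in the first paragraph is fine, but the rest of the argument is a sketch with a real gap: the ``iterating'' step is never carried out, and you yourself flag it as the main obstacle. Concretely, the engine of \autoref{thm.CuspNoPChar2} is the identity $(f^{1/2})^2 = f = g^2 - 2xy^{3/2}$, which lets you expand an explicit ansatz for $f^{1/2}$ and compare. When you pass to $f^{1/2+\delta}$, this squaring step becomes $(f^{1/2+\delta})^2 = f \cdot f^{2\delta}$, and the extra factor $f^{2\delta}$ has to be tracked through every manipulation. Your claim that the net effect is an $O(1/2^k)$ shift in all exponents is plausible but not established; in particular, $f^\delta$ lies in fractional powers of $(x,y)$, so multiplying by it pushes elements \emph{toward} $(2,x,y)B$, and you would need to check precisely that the $2^{1/4}$ worth of room from \autoref{thm.CuspNoPChar2} (which is in the $2$-direction) survives a loss that occurs primarily in the $x,y$-directions, at each stage of the refinement. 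Until that bookkeeping is actually written down, this is not a proof.

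The paper sidesteps all of this with a much shorter argument, and the key point is one you overlooked: \autoref{thm.CuspNoPChar2} is stated for an \emph{arbitrary} big Cohen--Macaulay $R^+$-algebra $B$, not just for $\widehat{R^+}$. Taking $B$ to be a sufficiently large perfectoid big Cohen--Macaulay algebra that computes the BCM test ideal $\tau_{\mathrm{BCM}}$, the conclusion $2^{1/4}f^{1/2}\notin(2,x,y)B$ (hence $f^{1/2}\notin(2,x,y)B$) together with faithful flatness of $B$ over the regular ring $R$ gives that $R\xrightarrow{1\mapsto f^{1/2}}B$ splits, so $\tau_{\mathrm{BCM}}(R,f^{1/2})=R$. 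Now one invokes the known right-constancy of BCM test ideals \cite[Proposition~6.4]{MaSchwedePerfectoidTestideal} to get $\tau_{\mathrm{BCM}}(R,f^{1/2+\epsilon})=R$ for all $1\gg\epsilon>0$, and concludes via the inclusion $\tau_{\mathrm{BCM}}\subseteq\tau_+$. In other words, the generality of \autoref{thm.CuspNoPChar2} over all $B$ is not incidental --- it is exactly what allows one to pass to $\tau_{\mathrm{BCM}}$ and use its established stability properties instead of perturbing the computation by hand.
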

\begin{proof}
    Fix a sufficiently large perfectoid big Cohen-Macaulay $R^+$-algebra $B$ so that we have $\tau_B(R, f^t) = \tau_{\mathrm{BCM}}(R, f^{t})$ for all $t$ in the sense of \cite{BMPSTWW-RH, MaSchwedeSingularitiesMixedCharBCM}.  In particular, enlarging $B$ does not change this mixed characteristic test ideal.  As $R$ is regular, $B$ is faithfully flat over $R$ (see \cite[Page 77]{HochsterHunekeInfiniteIntegralExtensionsAndBigCM} or \cite[Lemma 2.9]{BhattAbsoluteIntegralClosure}).  Thus, arguing as in the proof of \autoref{lem.EquivPurityForCompletionIdealContainment} (in particular, the implications \ref{lem.EquivPurityForCompletionIdealContainment.IdealContainmentWithompletion} $\Rightarrow$ \ref{lem.EquivPurityForCompletionIdealContainment.PurityViaInjectiveHull} $\Rightarrow$ \ref{lem.EquivPurityForCompletionIdealContainment.PurityNoCompletion}), we see that since $f^{1/2} \notin (2,x,y)B$ by \autoref{thm.CuspNoPChar2}, that $R \xrightarrow{1 \mapsto f^{1/2}} B$ splits.  
     We thus see that $\tau_{\mathrm{BCM}}(R, f^{1/2}) = R$.
    It follows from \cite[Proposition 6.4]{MaSchwedePerfectoidTestideal} that $\tau_{\mathrm{BCM}}(R, f^{1/2 + \epsilon}) = R$ for all $1 \gg \epsilon > 0$.  But we always have that 
    \[
        \tau_{\mathrm{BCM}}(R, f^{1/2 + \epsilon}) \subseteq \tau_+(R, f^{1/2 + \epsilon})
    \]
    since $B$ is an $R^+$-algebra, and so the corollary follows.
\end{proof}

This suggests the following question.  

\begin{question}
    Is $\ppt(R, f) = 5/6$, or perhaps more interestingly, is it something strictly in between $1/2$ and $5/6$?
\end{question}

\subsection{3 lines}

We consider an equation similar to an arrangement of 3 lines, but where $p$ replaces a variable.

\begin{proposition}
\label{lem.3Lines}
    Let $p > 0$ be a prime number and consider the rings $R \ceq \Z_p \llb x \rrb$ and $R_0 \ceq \F_p \llb t, x \rrb$. Let $f=px(p-x)$ and $f_0=tx(t-x)$. Then we have  
    {\upshape
	$$\ppt(f) =\mathrm{fpt}(f_0) = \begin{cases} 
						2/3 \text{ if } p = 3 \\
						(2p - 1)/3p \text{ if } p \equiv 2 \mod 3 \\
						2/3 \text{ if } p \equiv 1 \mod 3.
				\end{cases}$$} 
\end{proposition}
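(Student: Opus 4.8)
The plan is to recall the listed values of $\fpt(R_0,f_0)$---the $F$-pure threshold of the three lines $tx(t-x)$ is classical, and can in any case be extracted from the algorithm of \cite{HernandezFPureThresholdOfBinomial,HernandezFInvariantsOfDiagonalHyp} once one notes that, after a linear change of coordinates, $f_0$ defines the cone over three distinct points of $\bP^1$---and then to prove the matching inequalities $\fpt(R_0,f_0)\le\ppt(R,f)$ and $\ppt(R,f)\le\fpt(R_0,f_0)$. For the first inequality, together with the coarse bound $\ppt(R,f)\le 2/3$, I would invoke the argument of \autoref{prop.fpt<=+pt<=lct} in the homogeneous form recorded in the remark following it: writing $f=F(p,x)$ and $f_0=F(t,x)$ for the homogeneous cubic $F(s,x)=sx(s-x)$, the blow-up of the origin of $\Spec R$ has exceptional divisor $E\cong\bP^1_{\F_p}$ with $K_{X/R}=E$ and $\pi^*\Div(f)=3E+D$, where the reduced strict transform $D$ meets $E$ transversally in the three distinct points cut out by $f_0$ (viewed as a homogeneous equation) and contributes nothing to the different; global $F$-regularity of $\bigl(E,(K_X+E+tD)|_E\bigr)$ for $t<\fpt(R_0,f_0)$ (its section-ring pair being $(R_0,f_0^{\,t})$) then yields $+$-regularity of $(R,f^t)$ via \cite[Lemma 7.2]{MaSchwedeTuckerWaldronWitaszekAdjoint} or \cite[Theorem 7.2]{BMPSTWW1}, while a $+$-regular pair is KLT by \cite[Theorem 6.21]{MaSchwedeSingularitiesMixedCharBCM}, forcing $1-3t>-1$. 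For $p=3$ and $p\equiv 1\mod 3$, where $\fpt(R_0,f_0)=2/3$, these two bounds already pin down $\ppt(R,f)=2/3$.

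The remaining, and main, point is to sharpen the upper bound to $(2p-1)/(3p)$ when $p\equiv 2\mod 3$; I would do this by a direct computation in $R^+$. In this case $k\ceq(2p-1)/3$ is an integer with $1\le k\le p-1$, and since $R$ is regular \autoref{lem.EquivPurityForCompletionIdealContainment} gives $\ppt(R,f)=\sup\{\gamma\mid f^\gamma\notin(p,x)R^+\}$, so it suffices to show $f^{k/p}\in(p,x)R^+$. Factoring $f^{k/p}=p^{k/p}x^{k/p}(p-x)^{k/p}$ and applying \autoref{lemma-key}(iv) with $e=1$ and $\epsilon=1$ to $p-x\in(p,x)$ shows that $(p-x)^{k/p}\in\bigl(p^{a/p}x^{b/p}\bigm| a,b\in\Z_{\ge 0},\ a+b=k\bigr)R^+$, so $f^{k/p}$ belongs to the ideal of $R^+$ generated by the monomials $p^{(k+a)/p}x^{(k+b)/p}$ with $a+b=k$. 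If one of these monomials had both exponents $<1$, then $a,b\le p-k-1$, so $k=a+b\le 2(p-k-1)$, i.e.\ $3k\le 2p-2$; since $3k=2p-1$ this is impossible. Hence each such monomial has $p$-exponent or $x$-exponent at least $1$ and thus lies in $(p,x)R^+$, so $f^{k/p}\in(p,x)R^+$, giving $\ppt(R,f)\le k/p=(2p-1)/(3p)=\fpt(R_0,f_0)$.

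I expect the crux to be exactly this last computation. The key subtlety is that \autoref{lemma-key}(iv) must be applied at the precise exponent $k/p$ with $e=1$: using it at a general fractional power of $f$, or letting $e\to\infty$, only recovers the weaker bound $2/3$. I would also want to double-check the small case $p=2$ (where $k=1$, so $f^{1/2}=2^{1/2}x^{1/2}(2-x)^{1/2}\in 2^{1/2}x^{1/2}(2^{1/2},x^{1/2})R^+=(2x^{1/2},2^{1/2}x)R^+\subseteq(2,x)R^+$), and to make sure the stated values of $\fpt(R_0,f_0)$ are correctly sourced, as well as that the ``no different'' simplification in the lower-bound argument is justified (which it is, since this is the blow-up of the maximal ideal of a regular surface).
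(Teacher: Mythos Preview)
Your proposal is correct and follows essentially the same route as the paper: the lower bound $\fpt(R_0,f_0)\le\ppt(R,f)$ and the crude upper bound $2/3$ come from the homogeneous variant of \autoref{prop.fpt<=+pt<=lct} (exactly as you describe), and for $p\equiv 2\bmod 3$ the sharp upper bound is obtained by expanding $(p-x)^{k/p}$ via \autoref{lemma-key} with $e=1$ and checking by pigeonhole that every resulting monomial $p^{(k+a)/p}x^{(k+b)/p}$ lands in $(p,x)R^+$. The paper phrases this last step as showing $(x-p)^\alpha\in(p^{1-\alpha},x^{1-\alpha})$ after citing \autoref{lemma-reg-seqce}, but only the trivial direction of that lemma is used, so your direct formulation is equivalent (and arguably cleaner).
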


\begin{proof}
By \autoref{prop.fpt<=+pt<=lct}, it suffices to show that $  \ppt(f) \le \mathrm{fpt}(f_0)$, i.e. $f^\alpha \in \mathfrak{m}R^+ $ for any $\alpha \ge \mathrm{fpt}(f_0)$. By \autoref{lemma-reg-seqce}, it suffices to show that
$$ (x-p)^\alpha \in (p^{1-\alpha},x^{1-\alpha}).$$
Suppose $\alpha=a/p^e$. Then by \autoref{lemma-key}, we know that 
$$ (x-p)^{1/p^e}\in (p^{1/p^e},x^{1/p^e}).$$
Hence $ (x-p)^{a/p^e}\in (p^{1/p^e},x^{1/p^e})^a$. When $p \equiv 2 \mod 3$, we just take $a=(2p-1)/3$ and $e=1$. Then by pigeonhole principle, we have $(p^{1/p},x^{1/p})^{(2p-1)/3} \subset (p^{(p+1)/3p},x^{(p+1)/3p})$ and hence the desired containment. Otherwise, when $a/p^e \ge 2/3$, we still have $(p^{1/p^e},x^{1/p^e})^a \subset (p^{(p^e-a)/p^e},x^{(p^e-a)/p^e})$ by pigeonhole principle.  

When $p \not\equiv 2 \mathrm{mod} 3$ then it is well known that $\fpt(f_0) = 2/3 = \lct(f)$, and so there is nothing to do.
\end{proof}

\subsection{The polynomial $\mathbf{2^a + x^2}$ in mixed characteristic 2.}

Let $R \ceq \Z_2 \llb x \rrb$ be a power series ring over the 2-adic integers, let $a \geq 2$ be an integer, and consider the polynomial $f \ceq 2^a + x^2 \in R$. As usual, we consider the equal characteristic $2$ analogues $R_0 \ceq \F_2 \llb y, x \rrb$ and $f_0 \ceq y^a + x^2 \in R_0$. The polynomial $f$ does not always satisfy the hypotheses of \autoref{cor-ppt-test} but, nonetheless, we show that $\ppt(R, f) = 1/2 = \fpt(R_0, f_0)$ with a more elementary argument.

\begin{proposition} \label{prop.2ax2}
	Let $R \ceq \Z_2 \llb x \rrb$, $a \geq 2$ be an integer, and $f \ceq 2^a + x^2 \in R$. Then $\ppt(R, f) = 1/2$.
\end{proposition}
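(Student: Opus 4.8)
The plan is to prove the two inequalities $\ppt(R,f)\ge \tfrac12$ and $\ppt(R,f)\le \tfrac12$ separately. The lower bound is essentially formal, relying on \autoref{prop.fpt<=+pt<=lct}, while the upper bound is a short computation with \autoref{lemma-key} that exploits the coincidence between the exponent $b=2$ on $x$ and the residue characteristic $p=2$.

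For the lower bound I would cite \autoref{prop.fpt<=+pt<=lct}, which already gives $\ppt(R,f)\ge \fpt(R_0,f_0)$ for $R_0=\F_2\llbracket y,x\rrbracket$ and $f_0=y^a+x^2$; so it is enough to observe that $\fpt(R_0,f_0)\ge\tfrac12$. This holds in any characteristic: the monomial $x^{2n}$ occurs with coefficient $1$ in $(y^a+x^2)^n$, so $(y^a+x^2)^n\notin(y^{p^e},x^{p^e})$ whenever $2n<p^e$, and taking $n=\lceil tp^e\rceil$ with $e\gg 0$ shows $(R_0,f_0^t)$ is $F$-pure for every $t<\tfrac12$. (One can moreover check $\fpt(R_0,f_0)=\tfrac12$: for $p=2$ the identity $(y^a+x^2)^{2^{e-1}}=y^{a2^{e-1}}+x^{2^e}$ together with $a\ge 2$ forces $f_0^{2^{e-1}}\in(y^{2^e},x^{2^e})$, so $(R_0,f_0^{1/2})$ is not $F$-pure; this matches the surrounding discussion but is not needed for the proposition.)

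For the upper bound the key point is that $a\ge 2$ gives $2^a\in(2^2)R$, hence
$$f=2^a+x^2\in(2^2,x^2)R\subseteq(2^2,x^2)R^+.$$
Since $2=\tfrac{p}{p-1}$ when $p=2$, I can apply \autoref{lemma-key}(i) with $z=f$, $y=x^2$, $\epsilon=2$ and $e=1$ to conclude
$$f^{1/2}\in\big(2^{2/2},\,(x^2)^{1/2}\big)R^+=(2,x)R^+=\fm R^+.$$
Consequently $f^\alpha=f^{1/2}\cdot f^{\alpha-1/2}\in\fm R^+$ for every $\alpha\ge\tfrac12$ (up to units, hence independently of the choices of fractional powers), and by \autoref{lem.EquivPurityForCompletionIdealContainment} the map $R\xrightarrow{1\mapsto f^\alpha}R^+$ is not pure for any such $\alpha$. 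Thus $\ppt(R,f)\le\tfrac12$, and combining with the lower bound gives $\ppt(R,f)=\tfrac12$.

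The only step needing real attention is checking that the hypotheses of \autoref{lemma-key}(i) are met: the bound $\epsilon\le p/(p-1)$ is satisfied only because $p=2$ makes it an equality at $\epsilon=2$, and the containment $f\in(2^2,x^2)R^+$ genuinely uses $a\ge 2$. I do not anticipate any deeper obstacle — the argument is elementary precisely because the power of $x$ equals the residue characteristic, so a single application of \autoref{lemma-key} places the half-power of $f$ inside $\fm R^+$.
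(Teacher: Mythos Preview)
Your proof is correct. The lower bound via \autoref{prop.fpt<=+pt<=lct} matches the paper, and your upper bound reaches the same conclusion $f^{1/2}\in(2,x)R^+$ as the paper does, but by a slightly different route: you invoke \autoref{lemma-key}(i) directly with $\epsilon=2=p/(p-1)$ and $y=x^2$, whereas the paper instead writes down the explicit ansatz $f^{1/2}=2s+x$, squares it, and observes that the resulting equation $s^2+sx-2^{a-2}=0$ is monic in $s$. These are really the same argument under the hood---the paper's explicit construction is exactly what the proof of \autoref{lemma-key}(i) does in this special case---so your version is arguably cleaner in that it recognizes the general lemma applies and avoids redoing the computation. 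The paper's version has the minor advantage of making the governing equation visible. Your observation that the hypothesis $\epsilon\le p/(p-1)$ is met with equality precisely because $p=2$ is the right diagnosis of why this case is special.
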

\begin{proof}
	From the algorithm in \cite{HernandezFPureThresholdOfBinomial} discussed above, one sees that $\fpt(\F_2 \llb y, x \rrb, y^a + x^2) = 1/2$ for every $a \geq 2$. By \autoref{prop.fpt<=+pt<=lct}, it is then enough to show that $(2^a + x^2)^{1/2} \in (2, x) R^+$. To do so, it is enough to prove that there is some $s \in R^+$ such that
	$$(2^a + x^2)^{1/2} = 2 s + x.$$
	Squaring both sides and canceling, we see that this equation is satisfied if and only if $s^2 + sx - 2^{a-2} = 0$ which, being monic in $s$, has a solution in $R^+$.
\end{proof}

\section{Further questions}

There is a wide literature of computations of $F$-pure thresholds which would be natural to explore in mixed characteristic as identified in the introduction.  First, we would like to understand the examples $p^a + x^b$ in all mixed characteristics $(0,p)$. Here are the smallest examples for which we do not know the answer:

\begin{question}
    What is $\ppt(\Z_3 \llb x \rrb , x^3 + 27)$? What is $\ppt(\Z_2 \llb x \rrb, x^4 + 16)$?    
\end{question}

\begin{question}
	With notation as in this paper, for binomials $f_0 = y^a + x^b \in \bF_p\llbracket x,y\rrbracket$, and $f = p^a + x^b \in \Z_p\llbracket x \rrbracket$, are the only possibilities for the $\ppt$, either $\fpt(f_0)$ or the $\lct(f_0) = {1 \over a} + {1 \over b}$?
\end{question}

One can ask the analogous questions in higher dimensions (see \autoref{thm.CuspNoPChar2}), or for other equations (like line arrangements, see \autoref{lem.3Lines}).  Another interesting target would be elliptic curves and higher dimensional Calabi-Yau hypersurfaces, even diagonal ones such as $x^3 + y^3 + z^3 \in \bZ_p\llbracket x,y,z\rrbracket$ or $p^3 + y^3 + z^3 \in \bZ_p\llbracket y, z\rrbracket$, see also \cite{BhattSingh.FPTOfCalabiYau}.  

\bibliographystyle{skalpha}
\bibliography{MainBib.bib}
\end{document}